\documentclass{article}

%%%%%%%

%\usepackage{graphicx}
\usepackage{amsmath,amssymb,amsthm}
\usepackage[english]{babel}
\usepackage{mathrsfs}
\usepackage[T1]{fontenc}

\newtheorem{theorem}{Theorem}
\newtheorem{corollary}[theorem]{Corollary}
\newtheorem{proposition}[theorem]{Proposition}
\newtheorem{lemma}[theorem]{Lemma}

\newtheorem*{theorem*}{Theorem}
\newtheorem{definition}{Definition}

\theoremstyle{remark}
\newtheorem{remark}{Remark}

\newtheorem{example}{Example}

\newcommand{\HH}{\mathbb{H}}
\newcommand{\OO}{\mathbb{O}}
\newcommand{\C}{\mathbb{C}}
\newcommand{\N}{\mathbb{N}}
\newcommand{\re}{\mr{Re}}
\newcommand{\im}{\mr{Im}}
\newcommand{\R}{\mathbb{R}}

\newcommand{\tr}{\mr{tr}}
\newcommand{\is}{\mathbb{S}}
\newcommand{\isa}{\is_{\alpha}}
\newcommand{\Deltaa}{\Delta_{\alpha}}
\newcommand{\ra}{r_{\alpha}}

\newcommand{\oc}{\mathbb{O}}

%%%

\newcommand{\mr}{\mathrm}

%\newcommand{\mscr}{\mathit}
%%%

\newcommand{\lra}{\longrightarrow}

\newcommand{\pr}{\prime}

% ~ ` '

%%%%%%%%%%%%%%%%%%%%%%%%%%%%
%%%%%%%%%%%%%%%%%%%%%%%%%%%%

\begin{document}

\title{Zeros of regular functions of quaternionic and octonionic variable: a division lemma and the cam\-shaft effect}
%\runningtitle{Zeros of regular functions}

\author{R.\ Ghiloni, A.\ Perotti
\thanks{Work partially supported by MIUR (PRIN Project ``Propriet\`a geometriche delle variet\`a reali e com\-ples\-se") and GNSAGA of INdAM}
\\Department of Mathematics\\
  University of Trento\\ Via Sommarive, 14\\ I--38123 Povo Trento ITALY\\
  perotti@science.unitn.it}

\maketitle

\begin{abstract} 
We study in detail the zero set of a slice regular function of a quaternionic or octo\-nionic variable. 
By means of a division lemma for convergent power series, we find the exact relation existing between the zeros of two octonionic regular functions and those of their product.
In the case of octonionic polynomials, we get a strong form of the fundamental theorem of algebra. We prove that the sum of the multiplicities of zeros equals the degree of the polynomial and obtain a factorization in linear polynomials.
\end{abstract}

Keywords: {Quaternions, Octonions, Functions of a hypercomplex variable, Fundamental theorem of algebra}

Math.\ Subj.\ Class: {30C15, 30G35, 32A30}

\section{Introduction}
\label{sec:Introduction}
Let $\HH$ denote the skew field of quaternions over $\R$. Recently, Gentili and Struppa \cite{GeSt1,GeSt2} introduced the notion of \emph{slice regularity} (or \emph{Cullen regu\-larity}) for functions of a quaternionic variable. This notion do not coincide with the classical one of Fueter regularity (we refer the reader to \cite{Su} and \cite{GHS} for the theory of Fueter regular functions). In fact, the set of slice regular functions on a ball $B_R$ centered in the origin of $\HH$ coincides with that of all power series $\sum_iw^ia_i$ that converges in $B_R$. In particular, all the standard polynomials $\sum_{i=0}^n w^ia_i$ with right quaternionic coefficients, which fail to be Fueter regular, define slice regular functions on the whole space $\HH$.
The class of slice regular functions contains also the radially holomorphic functions introduced by Fueter 
%by means of holomorphic functions of one complex variable 
%\cite{F1935} 
(cf.\ \cite[\S11]{GHS}). 
%Fueter [?] used radially holomorphic functions obtained from holomorphic functions of one complex variable to get, via the application of the laplacian~$\Delta$, functions regular in his sense (see [?]). This technique was generalized to higher dimensions by Sce [?] and Qian [?].
Such functions fix every complex plane generated by the reals and by a unit imaginary quaternion, while the slice regular functions do not need to fulfil this property. Actually, radially holomorphic functions correspond to the very special class of those slice regular functions, which can be expanded into power series with real coefficients. The notion of slice regularity was extended to functions of an octonionic variable in \cite{GeStPreprint,GeStoStVl}
and to functions 
%defined on domains of the Euclidean space $\R^{n+1}$ 
with values in a Clifford algebra in \cite{GeSt4,CoSaSt1}.

In this paper, we study in detail the zero set of a slice regular function. For simplicity, in the sequel, we use the terminology ``regular function'' in place of ``slice regular function''.
 We obtain by different techniques some properties  already known (cf.\
\cite{PoSh,GeStPreprint,GeSt2,GeSto1,GeSt3,GeStVl}) and we extend others to the octonionic case. We find the exact relation existing between the zeros of two octonionic regular functions and those of their product.
In the case of octonionic polynomials, we obtain a strong form of the fundamental theorem of algebra.
The main tool we use is a division lemma, which generalizes to octonionic power series a result  proved by Beck \cite{Be} for quaternionic polynomials and by Ser\^odio \cite{Se1} for octonionic polynomials.
In the simpler case of regular polynomials and convergent power series with real (i.e.\ central) coefficients, the properties of the zero set have been studied also by other authors~(cf.\ \cite{DaN}).

We describe in more details the structure of the paper.
In Section \ref{sec:Division}, we recall some basic definitions and prove the division lemma (Lemma \ref{division-lemma}). Given a regular function $f$ and an octonion $\alpha$, 
%upon division w.r.t.\ the minimal polynomial of $\alpha$ 
we can associate to $f$ a remainder, which is a linear or constant regular polynomial. This remainder describes completely the intersection of the zero set $V(f)$ of $f$ with the conjugacy class of $\alpha$.
We can then obtain easily  a structure theorem for $V(f)$ (Theorem \ref{thm4}). This result was proved for quaternionic polynomials by Pogorui and Shapiro \cite{PoSh}, for quaternionic regular functions by Gentili and Stoppato in \cite{GeSto1} and it was extended to octonionic regular functions in \cite{GeStVl}.
The division lemma and the concept of \emph{normal series} associated to a regular function (see Section \ref{sec:Division} for precise definitions) suggest the definition of the multiplicity of a zero of a regular function.  Our definition is equivalent on quaternionic polynomials with the one given in \cite{BrW} and in \cite{GeSt3}.

In Section \ref{sec:Zerosofproducts}, we describe the zeros of the product $f\ast g$ of two regular functions in terms of the zeros of $f$ and $g$. 
We prove (Lemma \ref{Nfg}) that the normal series of a regular function is multiplicative, a result that is non--trivial in the octonionic case. An immediate consequence is that the conjugacy classes of the zeros of $f\ast g$ are exactly those containing zeros of $f$ or $g$. The precise relation between such zeros is stated in Theorem \ref{thm:cam-shaft}. We call this relation the ``camshaft effect''. In the associative case, it reduces to known results (cf.\ \cite{GeSto1} for quaternionic regular functions and \cite{Lam,SeSiu} for polynomials).

Section \ref{sec:Applications} contains some applications of the preceding results to regular polynomials. The fundamental theorem of algebra was proved by Niven \cite{Ni} for standard quaternionic polynomials. It was extended to other polynomials over $\HH$ by Eilenberg and Niven \cite{EiNi} and to octonionic polynomials by Jou \cite{Jou}.
In the book \cite[pp. 308ff]{EiSt}, Eilenberg and Steenrod gave a topological proof of the theorem valid for a class of real algebras including the complex numbers, the quaternions and the octonions. See also \cite{To}, \cite{RoO} and \cite{GeStVl} for other proofs.

In this context, our aim is a strong form of the fundamental theorem, in which a formula for the sum of the multiplicities of zeros is achieved.
Gordon and Motzkin \cite{GoM} proved, for polynomials on a (associative) division ring, that the number of conjugacy classes containing zeros of $f$ cannot be greater than the degree $n$ of $f$. This estimate was improved on the quaternions by Pogorui and Shapiro \cite{PoSh}: if $f$ has $m$ spherical zeros and $l$ non--spherical zeros, then $2m+l\le n$. 
Gentili and Struppa \cite{GeSt3} showed that, using the right definition of multiplicity, the number of zeros of $f$ equals the degree of the polynomial.
We generalize this strong form to the octonions (Theorem \ref{TFA}), giving a proof inspired by the simple argument used in \cite{PoSh}. From this result and the division lemma, we get a factorization lemma for regular polynomials (Lemma \ref{factlemma}) and some sufficient conditions for the finiteness of $V(f)$.
%The factorization lemma, together with the formulas of the ``camshaft effect'', gives an iterative method to construct an octonionic polynomial with a prescribed set of zeros (cf.\ \cite{Se2}).

The definitions and results are stated over octonions only, but they remain valid over quaternions as well. Since $\HH$ can be identified with a real subalgebra of the octonions, the corresponding statements for quaternions can be obtained directly from their octonionic version by specializing coefficients and variables.

%%%%%%%%%%%%%%%%%%%%%

\section{Division of regular series and multiplicity of zeros} \label{sec:Division}
Let $\oc$ be the non--associative division algebra of octonions (also called Cayley numbers). We refer to \cite{Sch,Lam,numbers} for the main properties of the algebras $\HH$ and~$\OO$.
 We recall that $\OO$ can be obtained from $\HH$ by the Cayley--Dickson process. Any $x\in\OO$ can be written as $x=x_1+x_2k$, where $x_1,x_2\in\HH$ and $k$ is a fixed imaginary unit of $\OO$. The addition on $\OO$ is defined componentwise and the product is defined by
\[xy=(x_1+x_2k)(y_1+y_2k)=x_1y_1-\bar y_2x_2+(x_2\bar y_1+y_2x_1)k.\]
Let $\{1,i,j,ij\}$ denote a real basis of $\HH$. Then a basis of the real algebra $\OO$ is formed by $\{1,i,j,ij,k,ik,jk,(ij)k\}$.

Let $B_R$ be the open ball of $\oc$ centered in the origin with (possibly infinite) radius $R$. Let $f:B_R \lra \oc$ and $g:B_R \lra \oc$ be regular functions with series expansions $f(w)=\sum_iw^ia_i$ and $g(w)=\sum_jw^jb_j$. 
The usual product of polynomials, where $w$ is considered to be a commuting variable (cf.\ for example \cite{Lam} and \cite{GRW,GGRWL}), can be extended to power series, and hence to slice regular functions (cf.\ \cite{GeSto1,GeSt3}).
Recall that $f \ast g$ is the regular function defined on $B_R$ by setting
$$\textstyle
(f \ast g)(w):=\sum_kw^k\big(\sum_{i+j=k}a_ib_j\big).
$$
We denote by $\overline{f}$ the  regular function defined on $B_R$ by 
$\overline{f}(w):=\sum_iw^i\overline{a}_i$.
If $f$ has real coefficients, i.e.\ $f=\bar f$, we will say that $f$ is \emph{real}. In this case, $(f\ast g)(\alpha)=f(\alpha)g(\alpha)$ for every $g$ and every $\alpha\in\oc$ and we will write $f g$ in place of $f\ast g$.
Moreover, if $f$ is real then $(f\ast g)\ast h=(fg)\ast h=f(g\ast h)$ for every $g,h$, and $\overline{f\ast g}$ is equal to $f \overline g$.

We denote by $N(f)$ the \emph{normal series} (or \emph{symmetrized series}) \emph{of $f$} defined by
\[N(f):=f \ast \overline{f}=\overline{f}\ast f.\]
We remark that $N(f)$ is regular and real on $B_R$.
 
Let $\alpha \in \oc$. 
%We say that $\alpha$ is \textit{non--real} if $\alpha \not\in \R$. 
We denote the trace $\tr(\alpha)=\alpha+\overline{\alpha}$ of $\alpha$ also by $t_{\alpha}$, the squared norm of $\alpha$ by $n_{\alpha}$ and define the real polynomial $\Delta_{\alpha}$, called \emph{characteristic polynomial of $\alpha$}, by
$$
\Delta_{\alpha}(w):=N(w-\alpha)=(w-\alpha) \ast (w-\overline{\alpha})=w^2-w \cdot t_{\alpha}+n_{\alpha}.
$$
It is well--known (cf.\ \cite{Se1}) that $\alpha$ is conjugate to an octonion $\beta$ if and only if $\Delta_{\alpha}=\Delta_{\beta}$.
% or, equivalently, if and only if $\is_{\alpha}=\is_{\beta}$. 
Indicate by 
%$\mk{a}$ the conjugacy  class of $\alpha$: $\mk{a}=\is_{\alpha}
$\isa$ the conjugacy  class of $\alpha$: $\isa:=\{\beta\in\oc\ |\ \beta=\re(\alpha)+|\im(\alpha)|\,I, I\in\is \}$, where $\is=\{I\in\oc\ |\ I^2=-1\}$ is the sphere of imaginary units. 
Note that $\isa$ reduces to the point $\{\alpha\}$ when $\alpha$ is real and it is a six--dimensional sphere when $\alpha$ is non--real.

We denote by $V(f)$ the zero set of $f$.
If $\alpha\in V(f)$ is real, we call $\alpha$ a \emph{real zero of $f$}.
If $\alpha$ is non--real and $\isa \subset V(f)$, we call $\alpha$ a \emph{spherical zero of $f$}.  Otherwise, we call $\alpha\in V(f)$ an \emph{isolated zero of $f$}. This terminology is justified by  Theorem \ref{thm4} below.

%For convenience, define $\is_{\mk{a}}:=\is_{\alpha}$, $\mr{tr}(\mk{a}):=\mr{tr}(\alpha)$, $|\mk{a}|:=|\alpha|$ and $\Delta_{\mk{a}}:=\Delta_{\alpha}$.

Let us present our division lemma.

\begin{lemma}\label{division-lemma}
Let  $f:B_R \lra \oc$ regular and let $\alpha\in B_R$. The following statements hold.
\begin{itemize}
\item[$(1)$]
There exist, and are unique, a regular function $g:B_R \lra \oc$ and an octonion $r\in \oc$ such that $f(w)=(w-\alpha)*g(w) +r$. 
\item[$(2)$]
Let $\alpha\in\oc$ be non--real. There exist, and are unique, a regular function $h:B_R \lra \oc$ and octonions $a,b \in \oc$ such that $f(w)=(\Delta_{\alpha} \, h)(w)+w \, a+b$. 
\end{itemize}
\end{lemma}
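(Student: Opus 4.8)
The plan is to carry out both divisions at the level of power-series coefficients, exhibit explicit formulas for the quotients, and then control convergence by Cauchy-type estimates. Writing $f(w)=\sum_i w^ia_i$ and the unknown quotient as $g(w)=\sum_j w^jb_j$, the identity $f=(w-\alpha)\ast g+r$ unravels, after expanding the $\ast$-product, into the coefficient relations $a_0=r-\alpha b_0$ and $a_k=b_{k-1}-\alpha b_k$ for $k\ge1$. First I would solve this system explicitly by setting $b_k:=\sum_{m\ge0}\alpha^m a_{k+m+1}$ and $r:=f(\alpha)=\sum_m\alpha^m a_m$; a direct substitution telescopes the geometric-type sum and verifies the relations, using only that $\alpha$ together with any single coefficient generates an associative (indeed commutative) subalgebra of $\oc$, so that the powers $\alpha^m$ behave exactly as over $\C$.

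The analytic heart of $(1)$ is convergence. Since $\alpha\in B_R$ I may fix $\rho$ with $|\alpha|<\rho<R$ and use the bound $|a_i|\le C_\rho\,\rho^{-i}$ coming from regularity of $f$ on $B_R$; multiplicativity of the octonionic norm gives $|b_k|\le C_\rho\,\rho^{-(k+1)}\sum_{m\ge0}(|\alpha|/\rho)^m$, a convergent geometric series, whence $\limsup_k|b_k|^{1/k}\le\rho^{-1}$, and letting $\rho\uparrow R$ shows $g$ is regular on $B_R$. For uniqueness, the difference of two representations yields $(w-\alpha)\ast\delta=c$ for a regular $\delta$ and a constant $c$; comparing coefficients forces $\delta_{k-1}=\alpha\delta_k$, hence $\delta_0=\alpha^k\delta_k$ for all $k$, and the convergence bound together with $|\alpha|<\rho$ makes $|\delta_0|=|\alpha|^k|\delta_k|\to0$. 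Thus $\delta_0=0$ and, as $\oc$ has no zero divisors, $\delta\equiv0$ and $c=0$.

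For $(2)$ I would iterate $(1)$. Dividing $f$ by $w-\alpha$ gives $f=(w-\alpha)\ast g+r_1$, and dividing the quotient by $w-\overline\alpha$ gives $g=(w-\overline\alpha)\ast h+r_2$; substituting and using $\R$-bilinearity of $\ast$ yields $f=(w-\alpha)\ast[(w-\overline\alpha)\ast h]+w\,r_2+(r_1-\alpha r_2)$, so one reads off $a=r_2$ and $b=r_1-\alpha r_2$. The one point that is \emph{not} automatic in the non-associative setting is the identity $(w-\alpha)\ast[(w-\overline\alpha)\ast h]=\Delta_\alpha\,h$. I would prove it by comparing the coefficient of $w^k$: the left-hand side produces the term $\alpha(\overline\alpha\,d_k)$, and since $\overline\alpha=t_\alpha-\alpha\in\R[\alpha]$, alternativity of $\oc$ lets me reassociate $\alpha(\overline\alpha\,d_k)=(\alpha\overline\alpha)d_k=n_\alpha d_k$, after which both sides match the coefficients of $\Delta_\alpha h$. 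Uniqueness in $(2)$ then reduces to uniqueness in $(1)$: any representation $f=\Delta_\alpha h+w\,a+b$ can be rewritten as $f=(w-\alpha)\ast[(w-\overline\alpha)\ast h+a]+(b+\alpha a)$, exhibiting a division of $f$ by $w-\alpha$ (quotient and remainder unique by $(1)$) followed by a division of that quotient by $w-\overline\alpha$ (again unique by $(1)$), which pins down $h$, $a$, and $b$.

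The main obstacle, and the feature genuinely new to the octonionic case, is precisely the failure of associativity of $\ast$: one cannot blindly write $(w-\alpha)\ast(w-\overline\alpha)\ast h=\Delta_\alpha h$. What rescues the argument is that the coefficients of the two linear factors lie in the commutative associative subfield $\R[\alpha]$, so every triple product that occurs involves at most two independent octonions and is therefore reassociable by alternativity. Beyond this algebraic point, the remaining work is the routine but essential convergence bookkeeping guaranteeing that all the constructed series are regular on the full ball $B_R$.
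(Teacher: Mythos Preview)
Your proof is correct and follows essentially the same route as the paper: the same explicit coefficient formula for the quotient in (1) (your $b_k=\sum_{m\ge0}\alpha^m a_{k+m+1}$ is exactly the paper's $b_n=\alpha^{-1-n}\sum_{j>n}\alpha^j a_j$ after reindexing), the same Cauchy-type convergence estimate, and the same iteration of (1) to obtain (2). The only notable differences are cosmetic: you spell out the alternativity argument needed for $(w-\alpha)\ast[(w-\overline\alpha)\ast h]=\Delta_\alpha h$, which the paper leaves implicit, and for uniqueness in (2) you reduce to two applications of uniqueness in (1), whereas the paper instead evaluates at $\alpha$ and $\overline\alpha$ to force $a=a'$, $b=b'$ and then appeals to density to get $h=h'$.
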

\begin{proof}
(1)\ Let us prove the existence of $g$ and of $r$. First, observe that, if such $g$ and $r$ exist, then $r$ must be equal to $f(\alpha)$. In fact, it is easy to see that the value of the function $(w-\alpha) \ast g(w)$ in $\alpha$ is zero. Assume that $f=\sum_iw^ia_i$ has positive convergence radius $R$. Recall that
\[R=\left(\limsup_{n\rightarrow +\infty}\sqrt[n]{|a_n|}\right)^{-1}.\]
Let $\alpha\in B_R$. If $\alpha=0$, then we can write
$f=w\left(\sum_{i=0}^{+\infty}w^ia_{i+1}\right)+a_0.$
Suppose $\alpha \neq 0$. A formal series computation imposes to define the coefficients of the power series $g=\sum_n w^n b_n$ as follows:
\[b_n:=\alpha^{-1-n}\left(f(\alpha)-\sum_{j=0}^{n}\alpha^j a_j\right)=\alpha^{-1-n}\sum_{j=n+1}^{+\infty}\alpha^j a_j\]
for each $n \in \N$.
Let us show that the series $g$ has radius of convergence at least~$R$. Since $|\alpha|<R$, $\limsup_{n\rightarrow +\infty}\sqrt[n]{|a_n|}<|\alpha|^{-1}$. This implies that, for every fixed $\rho$ with $|\alpha|<\rho<R$, there exists an integer $n_{\rho}$ such that the inequality $|a_j|\le \rho^{-j}$ is satisfied for every $j> n_{\rho}$. Let $x\in B_R$ and let $\rho$ be chosen in such a way that $\max\{|\alpha|,|x|\}<\rho<R$. 
 Then, for every $n\ge n_{\rho}$, it holds:
\begin{align*}
|x^n b_n|&\le|x|^n|b_n|\le|x|^n|\alpha|^{-1-n}\sum_{j=n+1}^{+\infty}|\alpha|^j |a_j|\le
|x|^n|\alpha|^{-1-n}\sum_{j=n+1}^{+\infty}|\alpha|^j \rho^{-j}\\
&=|x|^n|\alpha|^{-1-n}\left(\frac{|\alpha|}{\rho}\right)^{n+1}\frac1{1-|\alpha|/\rho}
=\left(\frac{|x|}{\rho}\right)^n \frac{1}{\rho-|\alpha|}.
\end{align*}
Therefore, the power series $\sum_n w^n b_n$ converges at every $x\in B_R$.
It remains to prove the uniqueness of $g$ and of $r$. We have just seen that $r$ must be equal to $f(\alpha)$ so the octonion $r$ is uniquely defined by $f$ and $\alpha$. If $f(w)=(w-\alpha)*g(w) +r=(w-\alpha)*g'(w) +r$, then $(w-\alpha)\ast (g(w)-g'(w))=0$ and this easily implies that $g=g'$.

%\noindent\smallskip
(2)\ Let $\alpha$ be non--real. Using the first part twice, we get $s\in\oc$ and a regular function $h$ on $B_R$ such that
\begin{align*}
f(w)&=(w-\alpha)\ast g(w)+r=(w-\alpha)\ast ((w-\bar\alpha)\ast h(w)+s)+r\\
&=(\Deltaa h)(w)+w s-\alpha s+r.
\end{align*}
To prove uniqueness, assume that  $f(w)=\Delta_{\alpha}  h+w  a+b=\Delta_{\alpha}  h'+w  a'+b'$. Then $f(\alpha)=\alpha a+b=\alpha a'+b'$, $f(\bar\alpha)=\bar\alpha a+b=\bar\alpha a'+b'$, from which we get $(\alpha-\bar\alpha)(a-a')=0$ and then $a=a'$, $b=b'$. Finally, $\Deltaa h=\Deltaa h'$ gives $h=h'$ on $B_R\setminus\isa$. By a density argument, $h$ and $h'$ must coincide everywhere.
\end{proof}

\begin{remark} \label{rem:pol}
As a by--product of the proof of the preceding theorem, we obtain that, if $f$ is a regular octonionic polynomial of degree $n>0$, then $g$ is a regular octonionic polynomial of degree $n-1$.
\end{remark}

\begin{definition}
Let the regular functions $g,h$ and the octonions $r,a,b$ be as above. If $\alpha$ is real, we call $r$ the \emph{remainder of $f$ with respect to $\alpha$} and the function $g$ the \emph{quotient of $f$ w.r.t.\ $\alpha$}.
If $\alpha$ is non--real, we call \emph{remainder of $f$ with respect to $\alpha$} the octonionic polynomial  defined by\, $w \, a+b$ and  \emph{quotient of $f$ w.r.t.\ $\alpha$} the function $h$. Note that they depend only on the conjugacy class $\isa$ of $\alpha$.
In any case, we will denote the remainder by $\ra(f)$. 
\end{definition}

If $f$ is real, then the coefficients of the quotient function and of the remainder are real. This fact is a consequence of the uniqueness of quotient and remainder:
\[(w-\alpha)\, g +r=f=\bar f=(w-\alpha)\, \bar g +\bar r\text{\quad($\alpha$ real)}\]
or
\[\Delta_{\alpha} \, h+w \, a+b=f=\bar f=\Delta_{\alpha} \, \bar h+w \, \bar a+\bar b\text{\quad($\alpha$ non--real)}
\]
imply $g=\bar g$, $r=\bar r$, $h=\bar h$, $a=\bar a$ and $b=\bar b$.
%belong to the subalgebra $\langle 1,\alpha\rangle$ of $\oc$ generated by $1$ and $\alpha$, which is isomorphic to the complex field $\C$ if $\alpha$ is non--real. The same conclusion follows if the coefficients of $f$ lie in $\langle 1,\alpha\rangle$.

As a first application of Lemma \ref{division-lemma}, we obtain:

\begin{corollary}\label{cor2}
Let $\ra(f)$ be the remainder of $f$ with respect to $\alpha$. The following statements hold.
\begin{itemize}
 \item[$(1)$] Let $\alpha$ be real. Then $\alpha$ is a zero of $f$ if and only if\, $\ra(f)=0$; that is, $w-\alpha \mid f$.
 \item[$(2)$] Let $\alpha$ be non--real and let $\ra(f)(w)=w \, a+b$. Then we have:
  \begin{itemize}
    \item[$\mr{(i)}$] $\alpha$ is a spherical zero of $f$ if and only if\, $a=0=b$; that is, $\Deltaa \mid f$.
    \item[$\mr{(ii)}$] $\alpha$ is an isolated zero of $f$ if and only if\, $a \neq 0$ and $\alpha=-ba^{-1}$.
  \end{itemize}
In particular, either $\isa \subset V(f)$ or $\isa \cap V(f)$ is empty or $\isa \cap V(f)$ consists of a single point. The latter situation occurs if and only if $a\neq 0$ and $-ba^{-1} \in \is_{\alpha}$.
\end{itemize}
\end{corollary}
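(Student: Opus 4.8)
The plan is to compute the restriction of $f$ to the conjugacy class $\isa$ explicitly and thereby reduce every assertion to solving a single affine octonionic equation on $\isa$.

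Part (1) I would dispatch immediately: by Lemma \ref{division-lemma}(1), together with the remark in its proof that $r=f(\alpha)$, the remainder $\ra(f)=r$ vanishes exactly when $\alpha\in V(f)$, and $r=0$ is by definition the statement $f=(w-\alpha)\ast g$, i.e.\ $w-\alpha\mid f$.

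For part (2) the key step is to observe that $\Deltaa$ vanishes identically on $\isa$. Since $\Deltaa(w)=w^2-w\,t_\alpha+n_\alpha$ has real coefficients and every $\beta\in\isa$ shares the trace and the norm of $\alpha$ and satisfies its own characteristic identity $\beta^2-\beta\,t_\beta+n_\beta=0$, I get $\Deltaa(\beta)=0$. Writing $f=(\Deltaa\,h)+w\,a+b$ as in Lemma \ref{division-lemma}(2) and using that $\Deltaa$ is real, so that $(\Deltaa\,h)(\beta)=\Deltaa(\beta)\,h(\beta)=0$, I obtain
\[
f(\beta)=\beta\,a+b\qquad\text{for every }\beta\in\isa.
\]
Hence $\isa\cap V(f)$ is precisely the zero locus on $\isa$ of the map $\beta\mapsto\beta\,a+b$.

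The remaining work is to solve this equation, and the main obstacle will be handling the non--associativity of $\oc$ correctly. If $a=0$ the function is constantly $b$ on $\isa$, giving $\isa\subset V(f)$ when $b=0$ and $\isa\cap V(f)=\varnothing$ when $b\neq0$. If $a\neq0$, I would multiply $\beta\,a=-b$ on the right by $a^{-1}$ and appeal to the alternative law $(\beta\,a)a^{-1}=\beta$ (legitimate because any two octonions generate an associative subalgebra) to conclude that $\beta_0=-b\,a^{-1}$ is the unique solution; the same law gives $(-b\,a^{-1})a=-b$, confirming it is a solution. Thus $\isa\cap V(f)=\{\beta_0\}$ exactly when $-b\,a^{-1}\in\isa$, and is empty otherwise. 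This trichotomy already yields the final displayed claim. To finish (i), if $\alpha$ is spherical then $\beta\,a+b=0$ for all $\beta$ in the six--sphere $\isa$; subtracting two such relations gives $(\beta-\beta')a=0$, so $a=0$ since $\oc$ is a division algebra, and then $b=0$, while the converse and the identification with $\Deltaa\mid f$ are immediate from the vanishing of the remainder $w\,a+b$. For (ii), ``isolated'' means a zero that is not spherical; by (i) this forbids $a=0=b$, and the subcase $a=0,\,b\neq0$ has no zeros, so necessarily $a\neq0$, whence $f(\alpha)=\alpha\,a+b=0$ forces $\alpha=-b\,a^{-1}$; conversely these two conditions make $\alpha$ a non--spherical zero. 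Over an associative division ring the same argument applies verbatim, the only change being that the cancellations are automatic rather than justified by alternativity.
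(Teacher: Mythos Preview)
Your proof is correct and follows essentially the same route as the paper: both reduce to the identity $f(\beta)=\beta\,a+b$ for $\beta\in\isa$ (using $\Deltaa\equiv0$ on $\isa$), then argue that two distinct zeros force $a=0$ and hence $b=0$, while a single zero with $a\neq0$ must equal $-ba^{-1}$. Your version is in fact slightly more careful than the paper's in explicitly invoking alternativity (via Artin's theorem) to justify the cancellations $(\beta a)a^{-1}=\beta$ and $(-ba^{-1})a=-b$, which the paper leaves implicit.
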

\begin{proof}
 By using Lemma \ref{division-lemma}, we can follow the same lines of proof as in the polynomial case (cf.\ \cite[\S3]{Se1}). 

If $\alpha\in\R$, the statement is trivial, since $f(\alpha)=\ra(f)$. 
 
Now assume that $\alpha$ is non--real. If $\ra(f)=0$, i.e. $a=b=0$, then $f(\beta)=\Deltaa(\beta)\cdot h(\beta)=0$ for every $\beta\in\is_{\alpha}$. If $a\ne0$ and $\alpha=-ba^{-1}$, then $f(\alpha)=\Deltaa(\alpha)\cdot h(\alpha)=0$. If $\is_{\alpha}$ contains two zeros $\beta$ and $\gamma$ of $f$, then the equality 
 \[\Delta_{\alpha}(\beta)\cdot h(\beta)+\beta \, a+b=f(\beta)=0=f(\gamma)=\Delta_{\alpha}(\gamma)\cdot h(\gamma)+\gamma\, a+b\]
 gives $\beta \, a=\gamma\, a$, since $\Delta_{\alpha}$ vanishes on $\beta,\gamma\in\is_{\alpha}$. If $\beta\ne\gamma$, then it must be $a=b=0$.
\end{proof}

If $f$ is real, then $\ra(f)=w \, a+b$ with $a,b$ real. Therefore $-ba^{-1} \in \R$ and $f$ has no isolated (non--real) zeros.
%$N(f)=f^2$. 
This means that the zeros of $f$ are all real or spherical and $V(f)$ is the union of the spheres $\isa$ with $\alpha\in V(f)$ (cf.\ \cite{DaN}).

If $f$ differs from a real $g$ by a constant non--real octonion, then we have the opposite situation: $f$ can have only isolated zeros. This property was proved for regular polynomials over $\HH$ in \cite[\S16.19]{Lam}.

\begin{proposition}\label{pro3}
Let $f(w)=g(w)+a_0$ be a regular function on $B_R$. Assume that $g$ is real and $a_0\in\oc$ is non--real. Then, if $V(f)$ is not empty, its elements are isolated zeros of $f$ contained in the complex plane generated by $1$ and $a_0$. 
\end{proposition}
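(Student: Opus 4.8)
The plan is to reduce the statement to the remainder description of zeros in Corollary \ref{cor2}, using two elementary facts about the division in Lemma \ref{division-lemma}: that the remainder is additive in $f$, and that a \emph{real} function has a real remainder. First I would note that $f \mapsto \ra(f)$ is additive. This is immediate from the uniqueness in Lemma \ref{division-lemma}(2): adding the two decompositions of Lemma-type for $f_1$ and $f_2$ yields a decomposition for $f_1+f_2$, which by uniqueness must be its own. Writing $f=g+a_0$, I can therefore split $\ra(f)=\ra(g)+\ra(a_0)$.

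Next I would compute the two summands. Since $g$ is real, the paragraph following the Definition shows that its quotient and remainder have real coefficients, so for non-real $\alpha$ I may write $\ra(g)(w)=w\,a'+b'$ with $a',b'\in\R$. For the constant $a_0$, the unique decomposition of Lemma \ref{division-lemma}(2) is simply $a_0=\Deltaa\cdot 0+w\cdot 0+a_0$, whence $\ra(a_0)(w)=a_0$. Combining, $\ra(f)(w)=w\,a'+(b'+a_0)$, i.e.\ in the notation of Corollary \ref{cor2} we have $a=a'$ (real) and $b=b'+a_0$.

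With this the conclusion follows from Corollary \ref{cor2}. No real zero can occur: for real $\alpha$ the remainder equals $f(\alpha)=g(\alpha)+a_0$, and $g(\alpha)$ is real while $a_0$ is not, so $f(\alpha)\neq 0$. For non-real $\alpha$, a spherical zero would force $a=b=0$ by Corollary \ref{cor2}(2)(i), hence $a_0=-b'$ would be real, a contradiction; so no conjugacy class lies entirely in $V(f)$. Consequently any $\alpha\in V(f)$ is non-real, and since it is not spherical, Corollary \ref{cor2} forces it to be isolated, with $a=a'\neq 0$ and $\alpha=-b\,a^{-1}=-(b'+a_0)/a'$. As $a'$ is a nonzero real scalar, this reads $\alpha=(-b'/a')\cdot 1+(-1/a')\,a_0\in\R+\R\,a_0$, which, $a_0$ being non-real, is exactly the complex plane generated by $1$ and $a_0$.

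The delicate points here are organizational rather than conceptual: one must check the additivity of $\ra$ and the reality of $\ra(g)$ (both resting on the uniqueness in Lemma \ref{division-lemma}), and keep careful track of which coefficient is real. The single step I would be most careful about is that $a'^{-1}$ is a \emph{real} scalar, so that scaling $b'+a_0$ by it cannot push $\alpha$ out of the plane $\R+\R\,a_0$; granting this, everything else is a direct reading of Corollary \ref{cor2}.
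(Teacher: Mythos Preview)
Your proof is correct and follows essentially the same route as the paper: apply the division lemma to the real function $g$ to obtain a remainder $wa+b$ with $a,b\in\R$, add the constant $a_0$ to get $\ra(f)=wa+(b+a_0)$, rule out real and spherical zeros because $a_0$ is non--real, and read off $\alpha=-(b+a_0)a^{-1}\in\R+\R a_0$ from Corollary~\ref{cor2}. Your explicit mention of the additivity of $\ra$ and the separate handling of real $\alpha$ are minor organizational additions, but the argument is the same.
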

\begin{proof} 
Let $\alpha\in V(f)$. Since $a_0=-g(\alpha)$ is non--real, also $\alpha$ must be non--real. 
From the division lemma (Lemma \ref{division-lemma}) applied to $g$, we get: $g(w)=\Deltaa h(w)+wa+b$, with real $a,b$. Then $\ra(f)=wa+b+a_0$ and $\alpha$ cannot be spherical. Otherwise, it would be $a_0=-b$. Therefore, $\alpha=-(b+a_0)a^{-1}$ is an isolated zero of $f$ belonging to the complex plane generated by $1$ and $a_0$.
\end{proof}

We still get the nonexistence of spherical zeros under a weaker condition on~$f$.

\begin{proposition}\label{pro4}
Let $f(w)=g(w)+wa_1+a_0$ be a regular function on $B_R$. Assume that $g$ is real and either $a_0$ or $a_1$ is non--real. Then $f$ has no spherical zeros.  
\end{proposition}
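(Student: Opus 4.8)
The plan is to reduce the claim to a remainder computation and then apply Corollary \ref{cor2}. Since a spherical zero must be non--real, I fix a non--real $\alpha\in B_R$ and recall that, by Corollary \ref{cor2}(2)(i), $\alpha$ is a spherical zero of $f$ exactly when its remainder $\ra(f)=w\,a+b$ vanishes, i.e.\ when $a=0$ and $b=0$. So it suffices to show that this remainder can never be the zero polynomial.

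The key step is to exploit the reality of $g$. Applying part $(2)$ of the division lemma to $g$, and invoking the fact observed just after Lemma \ref{division-lemma} that a real function has real quotient and remainder, I would write $g(w)=(\Deltaa\,h)(w)+w\,a'+b'$ with $a',b'\in\R$ and $h$ regular. Substituting this into $f(w)=g(w)+w\,a_1+a_0$ and collecting the linear and constant parts gives
$$f(w)=(\Deltaa\,h)(w)+w\,(a'+a_1)+(b'+a_0).$$
Since the right--hand side is exactly of the form prescribed by Lemma \ref{division-lemma}(2), the uniqueness of the remainder identifies $\ra(f)(w)=w\,(a'+a_1)+(b'+a_0)$, so that $a=a'+a_1$ and $b=b'+a_0$.

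If $\alpha$ were a spherical zero, then $a=b=0$ would force $a_1=-a'$ and $a_0=-b'$, so both $a_0$ and $a_1$ would be real, contradicting the hypothesis that at least one of them is non--real. As $\alpha$ was an arbitrary non--real point of $B_R$, $f$ admits no spherical zero. I expect the only delicate point to be the reality of $a'$ and $b'$: the entire argument rests on the remainder of the real part $g$ having real coefficients, so that the non--real contribution of $a_0$ or $a_1$ cannot be cancelled. This is the same mechanism already at work in the proof of Proposition \ref{pro3}, of which the present statement is a weakening of the hypotheses.
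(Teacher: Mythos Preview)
Your proof is correct and follows essentially the same route as the paper: apply the division lemma to the real part $g$, use the reality of its remainder coefficients, identify $\ra(f)$ by uniqueness, and conclude that a spherical zero would force $a_0$ and $a_1$ to be real. The only cosmetic difference is that the paper starts from an $\alpha\in V(f)$ whereas you take an arbitrary non--real $\alpha\in B_R$, but this changes nothing in the argument.
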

\begin{proof} 
We proceed as before. Assume that $V(f)\ne\emptyset$. Let $\alpha\in V(f)$. Then $\ra(f)=w(a+a_1)+b+a_0$, with $a,b$ real.
Then $\alpha$ cannot be spherical. Otherwise, it would be $a_0=-b$ and $a_1=-a$. 
\end{proof}

If $\alpha$ is real and $f(w)=(w-\alpha)\,g(w) +r$ for some regular function $g$ and $r \in \OO$, then $\ra(f)=r$,  $\ra(\bar f)=\bar r$ and  
\begin{equation}\label{fs-real}
N(f)=(w-\alpha)^2\, N(g)+(w-\alpha)(g \ast \bar r + r \ast \bar g)+n_r.
\end{equation}
It follows that $\ra(N(f))=n_r$. If $\alpha$ is non--real and $f$ has remainder\, $\ra(f)(w)=w \, a+b$, then we have that $\ra(\bar f)(w)=w \, \bar a+\bar b$ and hence
\[
\ra(N(f))=\ra(f \ast \bar f)(w)=w (a\bar b+b\bar a+t_{\alpha}n_a)+n_b-n_{\alpha}n_a.
\]
The latter equality follows immediately from the following fact
\begin{eqnarray*}
(wa+b) \ast (w\bar a+\bar b) &=& w^2 n_a+w(a\bar b+b\bar a)+n_b=\\
&=& (\Deltaa+w t_{\alpha}-n_{\alpha})n_a+w(a\bar b+b\bar a)+n_b.
\end{eqnarray*}

\begin{corollary}\label{cor3}
Given $\alpha \in \OO$, the following statements are equivalent: 
\begin{itemize}
 \item[$(1)$] $\is_{\alpha} \cap V(f)$ is non--empty.
 \item[$(2)$] $\is_{\alpha} \cap V(N(f))$ is non--empty.
 \item[$(3)$] $\Deltaa \mid N(f)$.
\end{itemize}
In particular, we have that \[V(N(f))=\bigcup_{\alpha \in V(f)}\is_{\alpha}.\]
\end{corollary}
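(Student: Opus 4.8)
The plan is to establish the three-way equivalence by reducing everything to the explicit remainder formulas for $\ra(N(f))$ recorded just above the statement, combined with Corollary \ref{cor2}, treating the cases $\alpha$ real and $\alpha$ non--real separately. The key soft input is that $N(f)$ is real, so (as observed right after Corollary \ref{cor2}) it has no isolated non--real zeros; consequently, for every $\alpha$ the intersection $\isa\cap V(N(f))$ is either empty or the whole class $\isa$. In the non--real case, Corollary \ref{cor2}(2)(i) then gives $(2)\iff(3)$ immediately, since both conditions say that $N(f)$ vanishes on all of $\isa$. Thus the genuine content is the equivalence $(1)\iff(3)$.

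For real (hence central) $\alpha$ everything is elementary. Because $\alpha$ is central, the $\ast$--product evaluates pointwise, so $N(f)(\alpha)=f(\alpha)\overline{f(\alpha)}=n_{f(\alpha)}$, which already yields $(1)\iff(2)$. For $(1)\Rightarrow(3)$ I would write $f=(w-\alpha)\,g$ when $f(\alpha)=0$ and substitute into \eqref{fs-real}, obtaining $N(f)=(w-\alpha)^2\,N(g)=\Deltaa\,N(g)$, so $\Deltaa\mid N(f)$; the converse follows by evaluating $N(f)$ at $\alpha$.

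The substance lies in $(1)\iff(3)$ for non--real $\alpha$, with $\ra(f)(w)=wa+b$. By Corollary \ref{cor2}(2)(i), condition $(3)$ is equivalent to $\ra(N(f))=0$, that is, to the two real scalar equations $a\bar b+b\bar a+t_{\alpha}n_a=0$ and $n_b=n_{\alpha}n_a$ read off from the displayed formula for $\ra(N(f))$. On the other hand, Corollary \ref{cor2}(2) says $(1)$ holds precisely when either $a=b=0$, or $a\ne0$ and $\beta:=-ba^{-1}\in\isa$. The case $a=0$ is immediate, since the second equation then forces $b=0$. For $a\ne0$ I would set $b=-\beta a$ and compute, using multiplicativity of the octonionic norm, $n_b=n_{\beta}n_a$, together with $a\bar b+b\bar a=-n_a t_{\beta}$; the two equations then become $n_a(n_{\beta}-n_{\alpha})=0$ and $n_a(t_{\alpha}-t_{\beta})=0$, which, since $n_a\ne0$, amount to $\Delta_{\beta}=\Delta_{\alpha}$, i.e.\ $\beta\in\isa$. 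This closes $(1)\iff(3)$, and with $(2)\iff(3)$ the full equivalence follows.

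For the final identity $V(N(f))=\bigcup_{\alpha\in V(f)}\isa$, the inclusion $\supseteq$ is exactly $(1)\Rightarrow(3)$, since $\Deltaa\mid N(f)$ forces $\isa\subset V(N(f))$, while the inclusion $\subseteq$ is $(2)\Rightarrow(1)$ applied to the conjugacy class $\is_x$ of each $x\in V(N(f))$, which produces a zero of $f$ lying in $\is_x$. The step I expect to require the most care is the computation of $a\bar b+b\bar a$ in the non--real case: because $\OO$ is non--associative, the reduction $a(\bar a\,\bar\beta)=(a\bar a)\bar\beta=n_a\bar\beta$ is not automatic and must be justified by alternativity, using that $\bar a$ lies in the real span of $1$ and $a$, so that $a$ and $\bar\beta$ generate an associative subalgebra.
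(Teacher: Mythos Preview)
Your proposal is correct and follows essentially the same route as the paper: both arguments split into the real and non--real cases, reduce everything to the explicit formula for $\ra(N(f))$, and in the non--real case translate the pair of conditions $a\bar b+b\bar a+t_{\alpha}n_a=0$, $n_b=n_{\alpha}n_a$ into $t_{\beta}=t_{\alpha}$, $n_{\beta}=n_{\alpha}$ for $\beta=-ba^{-1}$. Your presentation is slightly more systematic in first isolating $(2)\Leftrightarrow(3)$ from the realness of $N(f)$, and you are more explicit than the paper about invoking alternativity (via Artin's two--generator theorem) to justify $a(\bar a\,\bar\beta)=(a\bar a)\bar\beta$; the paper uses the same identity implicitly when it writes $t_{\beta}n_a=\beta a\bar a+a\bar a\bar\beta$.
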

\begin{proof}
If $\alpha$ is real and $\ra(f)=r$, then $\ra(N(f))=n_r=0$ if and only if $r=0$. If this is the case, Eq.\ \eqref{fs-real} implies that $\Deltaa=(w-\alpha)^2$ divides $N(f)$.
If $\alpha$ is non--real, then $\Deltaa \mid N(f)$ if and only if $\ra(N(f))=0$; that is,
\begin{equation}\label{rfs}
a\bar b+b\bar a+t_{\alpha}n_a=0,\quad n_b-n_{\alpha}n_a=0.
\end{equation}
If $a=b=0$, then $\is_{\alpha} \subseteq V(f)$ and equations \eqref{rfs} are trivial. If $a\ne0$ and $\beta=-ba^{-1}\in \is_{\alpha} \cap V(f)$, then $t_{\alpha}n_a=t_{\beta}n_a=\beta a\bar a+a\bar a\bar\beta=-b\bar a-a\bar b$. Moreover, $n_{\alpha}n_a=n_{\beta}n_a=n_b$ and equations \eqref{rfs} are satisfied.

Conversely, suppose that equations \eqref{rfs} are satisfied. If $a=0$, then $b=0$ and hence $\isa \subset V(f)$. Let $a\ne0$. Define the octonion $\beta:=-ba^{-1}$. Proceeding as above, we obtain that $t_{\alpha}n_a=t_{\beta}n_a$ and $n_{\alpha}n_a=n_{\beta}n_a$ or, equivalently, $t_{\alpha}=t_{\beta}$ and $n_{\alpha}=n_{\beta}$. It follows that $\beta \in \isa \cap V(f)$. Since $N(f)$ is real, its zeros are all real or spherical, so $V(N(f))$ is equal to $\bigcup_{\alpha \in V(f)}\is_{\alpha}$.
\end{proof}

From the preceding corollaries, we immediately get a new proof of the following result. It was proved for quaternionic polynomials by Pogorui and Shapiro \cite{PoSh}, for quaternionic regular functions by Gentili and Stoppato in \cite{GeSto1} and it was extended to octonionic regular functions in \cite{GeStVl}.

\begin{theorem}\label{thm4}
Let $f:B_R \longrightarrow \mathbb{O}$ be a regular function, which does not vanish on the whole $B_R$. For each $I \in \mathbb{S}$, denote by $\C_I$ the complex plane of $\OO$ generated by $1$ and $I$; that is, $\C_I:=\{x+yI \in \mathbb{O} \, | \, x,y \in \mathbb{R}\}$. The following statements hold.
\begin{itemize}
 \item[$(1)$] For all $I \in \mathbb{S}$, $\C_I \cap \bigcup_{\alpha \in V(f)}\mathbb{S}_{\alpha}$ is closed and discrete in $\C_I \cap B_R$.
 \item[$(2)$] For each $\beta \in \bigcup_{\alpha \in V(f)}\mathbb{S}_{\alpha}$, either\, $\mathbb{S}_{\beta} \subset V(f)$ or\, $\mathbb{S}_{\beta} \cap V(f)$ consists of a single point.
\end{itemize}
\end{theorem}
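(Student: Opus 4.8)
The plan is to deduce both statements from Corollaries \ref{cor2} and \ref{cor3}, pushing everything back to the elementary fact that a nonzero holomorphic function of one complex variable has only isolated zeros. Part $(2)$ should fall out almost immediately. Given $\beta\in\bigcup_{\alpha\in V(f)}\is_\alpha$, there is some $\alpha\in V(f)$ with $\beta\in\is_\alpha$, and since membership in a conjugacy class forces $\is_\beta=\is_\alpha$, the set $\is_\beta\cap V(f)$ is nonempty. If $\beta$ is real this just says $\beta\in V(f)$ and $\is_\beta=\{\beta\}$ is a single point; if $\beta$ is non--real, the final trichotomy in Corollary \ref{cor2} ($\is_\beta\subset V(f)$, or $\is_\beta\cap V(f)$ empty, or a single point) leaves exactly the two asserted alternatives once emptiness is ruled out.

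For part $(1)$ the first move is Corollary \ref{cor3}, which identifies $\bigcup_{\alpha\in V(f)}\is_\alpha$ with $V(N(f))$, where $N(f)=f\ast\overline f$ is \emph{real} and regular on $B_R$. It therefore suffices to prove that $\C_I\cap V(N(f))$ is closed and discrete in $\C_I\cap B_R$ for each $I\in\is$. Before exploiting holomorphy I would verify that $N(f)\not\equiv0$: writing $f=\sum_iw^ia_i$ and letting $m$ be the least index with $a_m\neq0$, the coefficient of $w^{2m}$ in $N(f)=\sum_kw^k\big(\sum_{i+j=k}a_i\overline a_j\big)$ is forced by minimality of $m$ to collapse to the single surviving term $a_m\overline a_m=n_{a_m}>0$. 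This computation uses no associativity, so it is valid over $\OO$.

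Next I would restrict $N(f)=\sum_iw^ic_i$, now with real coefficients $c_i$, to a slice $\C_I$. Because $\C_I$ is a commutative and associative subalgebra isomorphic to $\C$ via $I\mapsto i$, and the $c_i$ are central, the restriction $N(f)|_{\C_I}$ corresponds under this isomorphism to the ordinary complex power series $z\mapsto\sum_iz^ic_i$, convergent on the disk $\C_I\cap B_R$. Since not all $c_i$ vanish, this is a nonzero holomorphic function on a planar domain, whence its zero set is discrete and closed in that domain. Identifying $\C_I\cap V(N(f))$ with the zeros of this restriction then gives exactly statement $(1)$.

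The step demanding the most care is precisely this passage from the octonionic object $N(f)$ to a genuine one--variable holomorphic function: one must confirm that restricting a \emph{real} slice regular function to a single plane $\C_I$ really lands in $\C_I$ and is holomorphic there. This is where the realness (hence centrality) of the coefficients of $N(f)$ and the associative--commutative nature of the copy $\C_I\cong\C$ are indispensable, since they neutralize the non--associativity of $\OO$ that is otherwise the principal obstruction throughout this work.
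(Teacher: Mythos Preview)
Your proof is correct and follows the same strategy as the paper: identify $\bigcup_{\alpha\in V(f)}\is_\alpha$ with $V(N(f))$ via Corollary~\ref{cor3}, restrict $N(f)$ to a slice $\C_I$ to get a holomorphic function, and invoke the discreteness of zeros; part~$(2)$ is read off from Corollary~\ref{cor2}. You are in fact more careful than the paper on two points it leaves implicit: you check explicitly that $N(f)\not\equiv 0$ (via the $w^{2m}$ coefficient), and you justify why the restriction of $N(f)$ to $\C_I$ is a genuine $\C_I$--valued holomorphic function by exploiting the realness of its coefficients---whereas the paper simply asserts that a regular function is holomorphic on each slice.
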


\begin{proof}
Since $N(f)$ is regular on $B_R$, and then holomorphic on $\C_I\cap B_R$, its zero--set $\C_I \cap \bigcup_{\alpha \in V(f)}\mathbb{S}_{\alpha}=\C_I\cap V(N(f))$ is closed and discrete in $\C_I\cap B_R$. The second statement follows from Corollary \ref{cor2}.
\end{proof}

\begin{definition}
Given a non--negative integer $s$ and an octonion $\alpha$ in $V(f)$, we say that $\alpha$ is a \emph{zero of $f$ of multiplicity $s$} if $\Deltaa^s \mid N(f)$ and $\Deltaa^{s+1} \nmid N(f)$. We will denote the integer $s$, called \emph{multiplicity of $\alpha$}, by $m_f(\alpha)$.
\end{definition}

In the case of $\alpha$ a real element, this condition is equivalent to $(w-\alpha)^s\mid f$ and $(w-\alpha)^{s+1}\nmid f$. If $\alpha$ is a spherical zero, then $\Deltaa$ divides $f$ and $\bar f$. Therefore $m_f(\alpha)$ is at least 2.
If $m_f(\alpha)=1$, $\alpha$ is  called a \emph{simple zero of $f$}. The reader observes that the multiplicity of $\alpha$ depends only on the conjugacy class $\isa$ of $\alpha$.

\begin{remark}
 The preceding definition is equivalent on quaternionic polynomials with the one given in \cite{BrW} and in \cite{GeSt3}. 
\end{remark}

%%%%%%%%%%%%%%

\section{Zeros of products: the ``camshaft effect''}
\label{sec:Zerosofproducts}

Let $f,g:B_R \lra \oc$ be regular functions. The aim of this section is to describe the zeros of $f \ast g$ in terms of the zeros of $f$ and of $g$. 
The following result is non--trivial and very important in the octonionic setting. It was proved by Ser\^odio (\cite[Theorem 10]{Se2}) in the particular case in which $g$ is a constant.

\begin{lemma}\label{Nfg}
It holds:
\[
N(f \ast g)=N(f)\, N(g).
\]\end{lemma}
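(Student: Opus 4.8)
The plan is to prove the identity pointwise on the real axis and then invoke the identity principle for real power series, thereby sidestepping the non-associativity of $\ast$ entirely. First I would observe that $N(f\ast g)$, $N(f)$ and $N(g)$ are all \emph{real} regular functions on $B_R$, so that $N(f)\,N(g)$ is again a real regular function; consequently both sides of the asserted equality are power series with real coefficients. It therefore suffices to show that they take the same value at every real $t\in(-R,R)=\R\cap B_R$, since two real power series agreeing on an interval about the origin have identical coefficients.

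Next I would reduce the evaluation at a real $t$ to ordinary octonionic arithmetic. Because a real scalar lies in the centre of $\OO$, the real powers $t^k$ factor out of the convolution defining $\ast$, so for any regular $p,q$ one has $(p\ast q)(t)=p(t)\,q(t)$. Applying this with $q=\overline{p}$ gives
\[
N(p)(t)=p(t)\,\overline{p(t)}=n_{p(t)},
\]
the squared norm of the octonion $p(t)$. Hence
\[
N(f\ast g)(t)=n_{(f\ast g)(t)}=n_{f(t)\,g(t)},\qquad N(f)(t)\,N(g)(t)=n_{f(t)}\,n_{g(t)}.
\]
At this point the statement follows from the defining property of $\OO$ as a composition algebra, namely the multiplicativity of the norm: $n_{xy}=n_x\,n_y$ for all $x,y\in\OO$. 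Taking $x=f(t)$ and $y=g(t)$ yields $n_{f(t)g(t)}=n_{f(t)}\,n_{g(t)}$, so the two displayed quantities coincide for every real $t$, and the identity principle completes the proof.

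The point worth stressing is precisely where non-associativity is circumvented, and this is what I expect to be the main obstacle. In the associative (quaternionic) case one can argue purely formally: using $\overline{f\ast g}=\overline{g}\ast\overline{f}$ one writes $N(f\ast g)=(f\ast g)\ast(\overline{g}\ast\overline{f})$ and reassociates to $f\ast N(g)\ast\overline{f}=N(g)\,N(f)$. Over $\OO$ this reassociation is illegitimate, since the $\ast$-product of power series inherits the non-associativity of octonion multiplication. The difficulty is thus the unavailability of that formal manipulation, and the device that removes it is the passage to real arguments: there the $\ast$-product collapses to the genuine octonionic product, and the global composition identity $n_{xy}=n_x n_y$ — which holds despite non-associativity — does all the work.
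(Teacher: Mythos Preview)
Your argument is correct, and it is genuinely different from the paper's. The paper fixes $g$ and views both sides as quadratic forms in $f$; it polarizes to bilinear forms, reduces by $\R$-linearity to monomials $w^ia$, observes that the monomial degree factors out, and thereby reduces to constant $f=a$. A second polarization, now in $g$, reduces likewise to constant $g=b$, so that the whole lemma collapses to the single identity $(ab)(\bar b\,\bar a)=|a|^2|b|^2$, which the paper derives from alternativity. Your route instead evaluates both (real) power series on the real interval $(-R,R)$, where the $\ast$-product agrees with the pointwise octonionic product, so that the statement becomes $n_{f(t)g(t)}=n_{f(t)}\,n_{g(t)}$; the composition-algebra identity $n_{xy}=n_x n_y$ and the identity principle for real power series then finish the job. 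The paper's double-polarization argument is purely formal and would apply verbatim to formal power series, whereas your approach exploits analyticity and is shorter and more transparent in the convergent setting treated here; both, of course, bottom out in the multiplicativity of the octonionic norm.
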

\begin{proof}
Let $g$ be fixed. We must prove that the two real quadratic forms 
\[Q_1(f):=(f\ast g)\ast (\bar g\ast \bar f)\text{\quad and\quad} Q_2(f):=(f\ast \bar f)(g\ast \bar g)\]
coincide. By polarization, the equality of $Q_1$ and $Q_2$ is equivalent to the equality of the $\R$--bilinear symmetric forms
\[B_1(f_1,f_2):=(f_1\ast g)\ast(\bar g\ast \bar f_2)+(f_2\ast g)\ast(\bar g\ast \bar f_1),\]
\[B_2(f_1,f_2):=(f_1\ast \bar f_2)N(g)+(f_2\ast \bar f_1)N(g).\]
By linearity, it is sufficient to prove that $B_1$ and $B_2$ coincide for $f_1=w^ia$, $f_2=w^jb$, with $a,b\in\OO$. Since $B_\alpha(w^ia,w^jb)=w^{i+j}\ast B_\alpha(a,b)$ ($\alpha=1,2$), it suffices to prove that $B_1(a,b)=B_2(a,b)$  for every $a,b\in\OO$, which is equivalent to the equality $Q_1(a)=Q_2(a)$ for every $a\in\OO$. Now we can fix $a\in\OO$ and apply the same argument to the quadratic forms 
\[Q'_1(g):=(a\ast g)\ast (\bar g\ast \bar a)\text{\quad and\quad} Q'_2(g):=|a|^2 (g\ast \bar g).\]
We get that these two forms coincide if and only if $Q_1'(b)=Q'_2(b)$ for every $b\in\OO$. Therefore it suffices to prove the equality
\[(a\, b)(\bar b\, \bar a)=|a|^2\, |b|^2\text{\quad for every constant $a,b\in\OO$. }\]
In every alternative algebra the subalgebra generated by two elements is associative. Therefore $(a\, b)(\bar b\, \bar a)=a(b\bar b)\bar a=|a|^2\, |b|^2$.
 This completes the proof.
\end{proof}

\begin{corollary}\label{cor5}
It holds:
\[\bigcup_{\alpha\in V(f \ast g)}\is_{\alpha} \; \; = \, \, \bigcup_{\alpha\in V(f)\cup V(g)}\is_{\alpha}\]
or, equivalently, given any $\alpha \in \OO$, $V(f\ast g)\cap \is_{\alpha}$ is non--empty if and only if $\left(V(f)\cup V(g)\right)\cap\is_{\alpha}$ is non--empty. In particular, the zero set of $f\ast g$ is contained in the union of spheres $\bigcup_{\alpha\in V(f)\cup V(g)}\is_{\alpha}$.
\end{corollary}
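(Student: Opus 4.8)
The plan is to deduce Corollary \ref{cor5} directly from Lemma \ref{Nfg} together with Corollary \ref{cor3}, which already characterizes the intersection of $V(\cdot)$ with a conjugacy class in terms of the divisibility of the associated normal series by the characteristic polynomial. The key observation is that the corollary is a statement \emph{per conjugacy class}: by Corollary \ref{cor3}, for any $\alpha \in \OO$ the sphere $\isa$ meets $V(f\ast g)$ if and only if $\Deltaa \mid N(f\ast g)$, and likewise $\isa$ meets $V(f)\cup V(g)$ if and only if $\Deltaa \mid N(f)$ or $\Deltaa \mid N(g)$. Thus the whole statement reduces to a divisibility equivalence about real (central) series.

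First I would fix an arbitrary $\alpha \in \OO$ and, using the final assertion of Corollary \ref{cor3}, rewrite both sides of the claimed union equality in terms of characteristic polynomials. By Lemma \ref{Nfg} we have $N(f\ast g)=N(f)\,N(g)$, and both $N(f)$ and $N(g)$ are \emph{real} regular series, so the product on the right is an ordinary product of real (hence central) series. The crux then becomes the equivalence
\[
\Deltaa \mid N(f)\,N(g) \quad\Longleftrightarrow\quad \Deltaa \mid N(f) \ \text{ or }\ \Deltaa \mid N(g).
\]
The reverse implication is immediate. For the forward implication I would argue on a fixed complex plane $\C_I$ containing $\alpha$: restricted to $\C_I\cap B_R$ the real series $N(f)$, $N(g)$ are genuine holomorphic functions of one complex variable, and $\Deltaa$ restricts to the real quadratic $(z-\alpha_I)(z-\bar\alpha_I)$ whose roots are the two points of $\isa\cap\C_I$. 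A complex root of the holomorphic product $N(f)\,N(g)$ must be a root of $N(f)$ or of $N(g)$; combining with the fact that $N(f), N(g)$ are real (so their zero sets are unions of conjugacy classes, by the remark following Corollary \ref{cor2}) gives that $\Deltaa$ divides one of the two factors. This is the step I expect to require the most care: passing cleanly from ``$\Deltaa$ divides the product'' to ``$\Deltaa$ divides a factor'' for convergent series, rather than polynomials, where one cannot simply invoke unique factorization, but must instead use the holomorphic restriction and the order of vanishing at the roots of $\Deltaa$.

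Once this divisibility equivalence is established, the corollary follows by translating back through Corollary \ref{cor3}: the equivalence shows that $\isa$ meets $V(f\ast g)$ exactly when $\isa$ meets $V(f)\cup V(g)$, for every $\alpha$, which is precisely the ``or equivalently'' reformulation. Taking the union of the spheres $\isa$ over all such $\alpha$ then yields the displayed equality of zero sets, and the final ``in particular'' inclusion $V(f\ast g)\subseteq \bigcup_{\alpha\in V(f)\cup V(g)}\isa$ is an immediate consequence, since every point of $V(f\ast g)$ lies on some sphere $\isa$ meeting $V(f\ast g)$.
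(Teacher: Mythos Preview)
Your proposal is correct and rests on the same two ingredients as the paper's proof: Lemma \ref{Nfg} and Corollary \ref{cor3}. The difference is only in the connective tissue. You translate everything into a divisibility statement $\Deltaa \mid N(f)N(g) \Leftrightarrow \Deltaa \mid N(f)$ or $\Deltaa \mid N(g)$, and then justify the forward implication by restricting to $\C_I$ and arguing with holomorphic orders of vanishing. The paper bypasses this entirely by working with zero sets rather than divisors: since $N(f)$ is real, $(N(f)\cdot N(g))(\alpha)=N(f)(\alpha)\,N(g)(\alpha)$ for every $\alpha$, and because $\OO$ has no zero divisors one gets $V(N(f)N(g))=V(N(f))\cup V(N(g))$ immediately. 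Then the last line of Corollary \ref{cor3}, namely $V(N(h))=\bigcup_{\alpha\in V(h)}\isa$, applied to $h=f$, $g$, and $f\ast g$, finishes in one chain of equalities. Your route is sound, but the step you flagged as ``requiring the most care'' is in fact unnecessary once you notice the pointwise multiplicativity of real series.
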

\begin{proof}
By combining Corollary \ref{cor3} and Lemma \ref{Nfg}, we get:
\begin{eqnarray*}
\textstyle \bigcup_{\alpha\in V(f \ast g)}\is_{\alpha} &=& V(N(f \ast g)) = V(N(f))\cup V(N(g))=\\
 &=& \textstyle \left(\bigcup_{\alpha \in V(f)}\is_{\alpha}\right) \cup \left(\bigcup_{\alpha \in V(g)}\is_{\alpha}\right)=\bigcup_{\alpha\in V(f)\cup V(g)}\is_{\alpha}.
\end{eqnarray*}
Since $V(f \ast g) \subset V(N(f \ast g))$, the proof is complete.
\end{proof}

%\begin{corollary}
%If $f(\alpha)=0$, then  $N(f)=\Deltaa\, h$, with $h$ real.
%\end{corollary}
%\begin{proof}
%Since $f(\alpha)=0$, $f(w)=(w-\alpha)\ast g(w)$. Therefore $N(f)=N(w-\alpha)N(g)=\Deltaa N(g)$.
%\end{proof}

%??For simplicity, 
Now we give a precise description of the zeros of the product $f \ast g$. Thanks to Corollary \ref{cor5}, it is sufficient to analyze separately the spheres $\isa$ containing zeros of $f$ or of $g$.

Firstly, we consider the significant case: the non--real zeros.

Let $\alpha\in\oc$ be non--real. Suppose that
$\ra(f)(w)=wa+b$ and $\ra(g)(w)=wc+d$
for some $a,b,c,d \in \oc$. Then we have that
\begin{equation}\label{rfg}
\ra(f \ast g)(w)=w (ad+bc+t_{\alpha}ac)+bd-n_{\alpha}ac=(\ra(f)*\ra(g))(w)-\Deltaa ac.
\end{equation}
The latter assertion is an immediate consequence of the following fact
\begin{eqnarray*}
(wa+b) \ast (wc+d) &=& w^2(ac)+w(ad+bc)+bd=\\
&=& (\Deltaa+wt_{\alpha}-n_{\alpha})(ac)+w(ad+bc)+bd
%&=& \Deltaa \cdot (ac)+w (ad+bc+t_{\alpha}ac)+bd-n_{\alpha}ac.
\end{eqnarray*}

Thanks to Eq.\ (\ref{rfg}), we obtain:
 
\begin{theorem}[the camshaft effect] \label{thm:cam-shaft}
The following statements hold.
\begin{itemize}
 \item[$(1)$] Suppose $f$ has an isolated zero $\alpha$ in $\isa$ of multiplicity $s$ and $g$ is nowhere zero on~$\isa$. Then $f \ast g$ has an isolated zero $\alpha^{\pr}$ in~$\isa$ of multiplicity $s$, $ad+(\bar{\alpha} a)c \neq 0$ and it holds:
$$
\alpha^{\pr}=\left((\alpha a)d+n_{\alpha}ac\right)\left(ad+(\bar{\alpha} a)c\right)^{-1}.
$$
In particular, if $(\alpha a)d=\alpha (ad)$ and $n_{\alpha}ac=\alpha \left((\bar{\alpha}a)c\right)$ $($for example,  if $a$ is real or if $a,b,c,d$ belong to an associative subalgebra of\, $\OO\,)$,
%$($for example, in the quaternionic case$)$, 
then $\alpha^{\pr}=\alpha$.
 \item[$(2)$] Suppose $f$ is nowhere zero on~$\isa$ and $g$ has an isolated zero $\beta$ in $\isa$ of multiplicity~$t$. Then $f \ast g$ has an isolated zero $\beta^{\pr}$ in~$\isa$ of multiplicity $t$, $bc+a(\bar{\beta} c) \neq 0$ and it holds:
$$
\beta^{\pr}=\left(b(\beta c)+n_{\alpha}ac\right)\left(bc+a(\bar{\beta}c)\right)^{-1}.
$$
In particular, if $b(\beta c)=(b \beta)c$ and $a(\bar \beta c)=(a \bar \beta) c$ $($for example, if $c$ is real or if $a,b,c,d$ belongs to an associative subalgebra of\, $\OO\,)$, then we have that $\beta'=(b+a\bar{\beta}\, ) \beta (b+a\bar{\beta} \, )^{-1}$.
%In particular, in the quaternionic case, we have that $\beta^{\pr}=\left(b+a\bar{\beta} \, \right) \beta \left(b+a\bar{\beta} \, \right)^{-1}$.
 \item[$(3)$] Suppose $f$ has an isolated zero $\alpha$ in~$\isa$ of multiplicity $s$ and $g$ has an isolated zero $\beta$ in $\isa$ of multiplicity~$t$. Then, if $a(\beta c)=(\bar{\alpha}a)c$, then $\alpha$ is a spherical zero of $f \ast g$ of multiplicity $s+t$. Otherwise, $f \ast g$ has an isolated zero $\gamma$ in~$\isa$ of multiplicity $s+t$ such that
$$
\gamma=\left(-(\alpha a)(\beta c)+n_{\alpha}ac\right)\left(-a(\beta c)+(\bar{\alpha}a)c\right)^{-1}.
$$
\item[$(4)$] Let $\alpha$ be a spherical zero of $f$ or of $g$, and let $s, t$ be the respective multiplicities of $\alpha$ $($possibly zero$)$. Then $\alpha$ is a spherical zero of $f \ast g$ of multiplicity $s+t$.
\end{itemize}
\end{theorem}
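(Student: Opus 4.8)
The plan is to reduce all four statements to the single remainder identity \eqref{rfg} together with the multiplicativity of the normal series (Lemma \ref{Nfg}). Writing $\ra(f)(w)=wa+b$ and $\ra(g)(w)=wc+d$, equation \eqref{rfg} gives $\ra(f\ast g)(w)=wA+B$ with $A=ad+bc+t_{\alpha}ac$ and $B=bd-n_{\alpha}ac$. By Corollary \ref{cor2}, the type of $\isa\cap V(f\ast g)$ is read off from $(A,B)$: a single point $-BA^{-1}$ when $A\neq0$, all of $\isa$ when $A=B=0$, and $\emptyset$ when $A=0\neq B$. The hypotheses translate, again by Corollary \ref{cor2}, as follows: an isolated zero $\alpha$ of $f$ means $a\neq0$ and $b=-\alpha a$; a spherical zero means $a=b=0$; and $g$ nowhere zero on $\isa$ means $(c,d)\neq(0,0)$ and, when $c\neq0$, $-dc^{-1}\notin\isa$. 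So the whole theorem becomes a matter of substituting these relations into $A$ and $B$.

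First I would dispose of every multiplicity assertion at once. Since $N(f\ast g)=N(f)\,N(g)$ by Lemma \ref{Nfg} and $N(f),N(g)$ are real, the $\Deltaa$-adic order is additive, whence $m_{f\ast g}(\alpha)=m_f(\alpha)+m_g(\alpha)$ for every $\alpha$; this yields the values $s$, $t$ and $s+t$ in parts $(1)$--$(4)$ with no further work. Part $(4)$ is then immediate: if $\alpha$ is spherical for $f$ then $a=b=0$, so $A=B=0$ regardless of $c,d$, and $\alpha$ is spherical for $f\ast g$ of multiplicity $s+t$ (the case where $\alpha$ is spherical for $g$ is symmetric, using $c=d=0$).

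For parts $(1)$--$(3)$ I would carry out the substitutions. Using $b=-\alpha a$ and the identity $(\alpha a)c+(\bar\alpha a)c=t_{\alpha}(ac)$ (distributivity together with $t_{\alpha}\in\R$), the coefficient simplifies to $A=ad+(\bar\alpha a)c$, while $B=-(\alpha a)d-n_{\alpha}ac$; hence, whenever $A\neq0$, the unique zero is $-BA^{-1}=((\alpha a)d+n_{\alpha}ac)\,A^{-1}$, which is exactly the formula of part $(1)$. Feeding in also $d=-\beta c$ turns these into $A=-a(\beta c)+(\bar\alpha a)c$ and $-B=-(\alpha a)(\beta c)+n_{\alpha}ac$, giving part $(3)$; here the spherical alternative occurs precisely when $a(\beta c)=(\bar\alpha a)c$, i.e. $A=0$, in which case $B=0$ is forced because $m_{f\ast g}(\alpha)=s+t>0$ already guarantees $\isa\cap V(f\ast g)\neq\emptyset$ by Corollary \ref{cor3}. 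Part $(2)$ is the mirror computation, keeping $a,b$ general and using $d=-\beta c$: the same bookkeeping gives $A=bc+a(\bar\beta c)$ and $-B=b(\beta c)+n_{\alpha}ac$. The ``in particular'' clauses are then a one-line check: under the stated associativity hypotheses one has $-B=\alpha A$ (so $-BA^{-1}=\alpha$ by right alternativity), respectively $A=(b+a\bar\beta)c$ and $-B=((b+a\bar\beta)\beta)c$, which rearrange to the conjugation $\beta'=(b+a\bar\beta)\beta(b+a\bar\beta)^{-1}$.

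The hard part will be showing that $A\neq0$ in parts $(1)$ and $(2)$, i.e. that an isolated zero on one factor, against a factor with no zero on $\isa$, cannot degenerate into a spherical zero of the product. I would argue by contradiction in part $(1)$. If $c=0$ then $A=ad$ forces $d=0$, so $g$ would have the spherical zero $\isa$, a contradiction; hence $c\neq0$, and putting $\beta_0:=-dc^{-1}$ (so $d=-\beta_0c$) the relation $A=0$ reads $a(\beta_0 c)=(\bar\alpha a)c$. Taking norms and using multiplicativity of $|\cdot|$ on $\OO$ gives $n_{\beta_0}=n_{\alpha}$; pairing both sides with $ac$ and using the composition-algebra identities $\langle ax,ay\rangle=n_a\langle x,y\rangle$ and $\langle xc,yc\rangle=n_c\langle x,y\rangle$ gives $\re(\beta_0)=\re(\alpha)$. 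Together these force $\beta_0\in\isa$, so $\beta_0$ is a zero of $g$ on $\isa$, contradicting the hypothesis. Thus $A\neq0$, the product has an isolated (not spherical) zero, and $-BA^{-1}\in\isa$ is automatic from $m_{f\ast g}(\alpha)=s>0$ via Corollary \ref{cor3}. Part $(2)$ is identical with the roles of the two factors exchanged.
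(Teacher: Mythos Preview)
Your proof is correct and follows essentially the same architecture as the paper's: reduce everything to the remainder identity \eqref{rfg}, read off the zero type via Corollary~\ref{cor2}, and control multiplicities through $N(f\ast g)=N(f)N(g)$. Two small points of comparison are worth noting. First, for multiplicities the paper writes $N(f)=\Deltaa^s h_1$, $N(g)=\Deltaa^t h_2$ and explicitly checks, via \eqref{rfg} applied to the \emph{real} remainders of $h_1,h_2$, that $\Deltaa\nmid h_1h_2$; your one--line ``the $\Deltaa$--adic order is additive'' is the same fact stated abstractly (valid because $\Deltaa$ is irreducible over $\R$ for non--real $\alpha$, so the relevant local ring is a DVR), but you should say this explicitly since it is exactly where the paper spends its effort. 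Second, for the key step $A\neq0$ in parts (1)--(2), the paper argues that $A=0$ forces $\bar\alpha=-((ad)c^{-1})a^{-1}$ and then invokes associativity and commutativity of the trace form to conclude $t_\alpha=t_{-dc^{-1}}$ and $n_\alpha=n_{-dc^{-1}}$; your route via the composition--algebra identities $\langle ax,ay\rangle=n_a\langle x,y\rangle$ and $\langle xc,yc\rangle=n_c\langle x,y\rangle$ reaches the same conclusion and is arguably cleaner, as it avoids any discussion of trace identities in a non--associative setting. The remaining substitutions and the treatment of the ``in particular'' clauses match the paper.
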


\begin{proof}
In the following, we will use the fact that, in any alternative quadratic algebra, the trace is associative and commutative (cf.\ e.g.\  \cite[\S9.1.2]{numbers}).
 
First, we prove the theorem without the part concerning multiplicities.

(1)\ In this case $a\ne0$ and $\alpha a+b=0$ (cf.\ Corollary \ref{cor2}(2)(ii)). If $c=0$, then $d\ne0$ (otherwise $g$ would vanish on the whole sphere $\isa$).  Then $ad+(\bar\alpha a)c=ad\ne0$ and Eq.\ \eqref{rfg} gives the isolated zero $\alpha'=-(bd)(ad)^{-1}=((\alpha a)d)(ad)^{-1}$  of $f\ast g$. 
If $c\ne0$, then $ad+(\bar\alpha a)c$ cannot vanish. In fact, if it were $ad+(\bar\alpha a)c=0$, then we would have that $\bar\alpha=-((ad)c^{-1})a^{-1}$. But then $\tr({\alpha})=\tr(-dc^{-1})$ and $|\alpha|=|-dc^{-1}|$. This would mean that $\beta=-dc^{-1}$ belongs to $\isa\cap V(g)$, contradicting our assumptions. 

Corollary \ref{cor5} tells that $\isa$ must contain a zero of $f\ast g$. From Corollary \ref{cor2}(2)(ii) and Eq.\ \eqref{rfg}, this zero must be 
\[\alpha'=(-bd+n_{\alpha}ac)(ad+bc+t_{\alpha}ac)^{-1}.\]
Note that the octonion $ad+bc+t_{\alpha}ac$\, does not vanish, since it is equal to $ad-(\alpha a)c+(t_{\alpha}a)c=ad-(\alpha a)c+(\alpha a)c+(\bar\alpha a)c=ad+(\bar\alpha a)c$. We conclude that $\alpha'=((\alpha a)d+n_{\alpha}ac)(ad+(\bar\alpha a)c)^{-1}.$

(2)\ This second case is similar to the first one. Now $c\ne0$ and $\beta c+d=0$. If $a=0$, then $b\ne0$ and $bc+a(\bar{\beta}c)=bc\ne0$.
From Eq.\ \eqref{rfg} we get the isolated zero 
$\beta'=(b(\beta c))(bc)^{-1}.$ If $a$  does not vanish, then it still holds $bc+a(\bar\beta c)\ne0$. In fact, if this were not true, we would have that $\beta$ is conjugate to $-ba^{-1}$ and therefore $-ba^{-1}\in \isa \cap V(f)$.
By using Corollary \ref{cor5}, Corollary \ref{cor2}(2)(ii), $\beta c=-d$ and $t_\alpha=t_\beta$, we obtain the unique zero
\[\beta'=(-bd+n_{\alpha}ac)(ad+bc+t_{\alpha}ac)^{-1}=(b(\beta c)+n_{\alpha}ac)(bc+a(\bar{\beta}c))^{-1}\]
of $f \ast g$ in $\isa$.

(3)\ In this case, $a,c\ne0$, $b=-\alpha a$, $d=-\beta c$. From Eq.\ \eqref{rfg}, we get the remainder:
\begin{align*}
\ra(f \ast g)(w)&= w (-a(\beta c)-(\alpha a)c+t_{\alpha}ac)+(\alpha a)(\beta c)-n_{\alpha}ac\\
& =w (-a(\beta c)+(\bar\alpha a)c)+(\alpha a)(\beta c)-n_{\alpha}ac.
\end{align*}
If $a(\beta c)=(\bar\alpha a)c$, then $\ra(f \ast g)$ is constant. On the other hand, by Corollary \ref{cor5}, $\ra(f \ast g)$ vanishes on some point of $\isa$ and hence it is null. It follows that $f \ast g$ has $\alpha$ as a spherical zero.
Instead, if $a(\beta c)\ne(\bar\alpha a)c$, then $f\ast g$ has the isolated zero
\[\gamma=\left(-(\alpha a)(\beta c)+n_{\alpha}ac\right)\left(-a(\beta c)+(\bar{\alpha}a)c\right)^{-1}.\]

(4)\ If the real polynomial $\Deltaa$ divides $f$ or $g$, then it divides $f\ast g$. The conclusion follows from Corollary \ref{cor2}(2)(i).

Now we consider multiplicities. If $\alpha$ is a zero of $f$ of multiplicity $s$ and $\beta\in\isa$ is a zero of $g$ of multiplicity $t$, then $N(f)=\Deltaa^s h_1$ and $N(g)=\Deltaa^t h_2$, where $h_1$ and $h_2$ real regular functions such that $\ra(h_1)=wa+b\ne0$ and $\ra(h_2)=wc+d\ne0$. From Lemma \ref{Nfg}, it follows that $N(f\ast g)=\Deltaa^{s+t}(h_1h_2)$. We have to prove that $\Deltaa^{s+t+1}\nmid N(f\ast g)$. By a density argument, we see that this is equivalent to show that $\Deltaa\nmid h_1h_2$, i.e.\ that the remainder $\ra(h_1h_2)$ is not zero. From Eq.\ \eqref{rfg}, $\ra(h_1h_2)$ is zero if and only if
\begin{equation}
ad+bc+t_{\alpha}ac=bd-n_{\alpha}ac=0.
	\label{eq9}
\end{equation} 
Since $a,b,c,d$ are real, Eq.\ \eqref{eq9} implies that $a(d+\alpha c)(d+\bar\alpha c)=0$. But this cannot happen since $\alpha$ is non--real and $a\ne0$.
\end{proof}

\begin{example}
For every non--real $\alpha\in\oc$, $\bar\alpha$ is conjugated to $\alpha$. Let $a$ be such that $\bar\alpha=a\alpha a^{-1}$. If $f(w)=wa-\alpha a$, $g(w)=w-\alpha$, then $f\ast g=w^2a-w(a\alpha+\alpha a)+\alpha a\alpha$ and $V(f)=V(g)=\{\alpha\}$ while  $V(f\ast g)=\isa$ (see Theorem \ref{thm:cam-shaft}(3)).
\end{example}

\begin{remark}
In the octonionic case, the camshaft effect appears even if one of the functions is constant: the zeros of $f$ and of $f\ast c$ ($c\in\OO$ a constant) can be different (cf.\ Ser\^odio \cite{Se1} when $f$ is a polynomial). For example, let $f(w)=wi-j$, $g(w)=k$. Then $f\ast g=w(ik)-jk$ has $-ij$ as unique zero, while $V(f)=\{ij\}$.
\end{remark}

It remains to consider real zeros.

\begin{proposition}\label{realzeros}
Let $\alpha$ be a real octonion. Suppose that $\alpha$ is a zero of $f$ of multiplicity~$s$ and a zero of $g$  of multiplicity~$t$ $(\, s$ and $t$ possibly zero$)$. Then $\alpha$ is a zero of $f \ast g$ of multiplicity $s+t$. 
\end{proposition}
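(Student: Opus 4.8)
The plan is to reduce everything to the real-variable behaviour of $f$ and $g$ near $\alpha$, exploiting that $(w-\alpha)$ is a \emph{real} (hence central) regular function. By the characterization of multiplicity at a real point recorded after the definition in Section \ref{sec:Division}, the hypotheses mean exactly that $(w-\alpha)^s\mid f$, $(w-\alpha)^{s+1}\nmid f$, and likewise for $g$ with exponent $t$. So first I would write $f=(w-\alpha)^s\,\tilde f$ and $g=(w-\alpha)^t\,\tilde g$ for regular functions $\tilde f,\tilde g$ on $B_R$. Applying Lemma \ref{division-lemma}(1) (with Corollary \ref{cor2}(1)) to $\tilde f$ and $\tilde g$ at the real point $\alpha$, the maximality of $s$ and $t$ forces $\tilde f(\alpha)\neq0$ and $\tilde g(\alpha)\neq0$: otherwise $(w-\alpha)$ would divide $\tilde f$ (resp.\ $\tilde g$), allowing one more factor.

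Next I would pull the two real factors out of the product. Since $(w-\alpha)^s$ and $(w-\alpha)^t$ have central (real) coefficients, they are central for $\ast$ and associate with any regular functions, these being the identities $(p\ast g)\ast h=p\,(g\ast h)$ recorded for real $p$ in Section \ref{sec:Division}. Using them I expect to obtain
$$
f\ast g=\big((w-\alpha)^{s}\tilde f\big)\ast\big((w-\alpha)^{t}\tilde g\big)=(w-\alpha)^{s+t}\,(\tilde f\ast \tilde g).
$$
The only care needed is that $\ast$ is not associative over $\OO$; the rearrangement is legitimate precisely because each factor being moved is real, so it falls under the stated partial-associativity and centrality.

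It then remains to show that $\alpha$ is not a zero of $\tilde f\ast\tilde g$, i.e.\ that $(w-\alpha)^{s+t}$ is the exact power of $(w-\alpha)$ dividing $f\ast g$. Evaluating the $\ast$-product at the \emph{real} point $\alpha$, a term-by-term computation (using only distributivity and the centrality of the real scalar $\alpha$, so that $\alpha^{i+j}a_ib_j=(\alpha^i a_i)(\alpha^j b_j)$) gives $(\tilde f\ast\tilde g)(\alpha)=\tilde f(\alpha)\,\tilde g(\alpha)$. As $\OO$ is a division algebra and $\tilde f(\alpha),\tilde g(\alpha)\neq0$, this value is nonzero. By Corollary \ref{cor2}(1) this means $(w-\alpha)\nmid \tilde f\ast\tilde g$; combined with the uniqueness in Lemma \ref{division-lemma}(1) (which yields the cancellation $(w-\alpha)\ast u=(w-\alpha)\ast v\Rightarrow u=v$), no further factor $(w-\alpha)$ can be extracted. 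Hence $(w-\alpha)^{s+t}\mid f\ast g$ while $(w-\alpha)^{s+t+1}\nmid f\ast g$, and the real-point characterization of multiplicity gives $m_{f\ast g}(\alpha)=s+t$.

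The main obstacle I anticipate is the bookkeeping around the failure of associativity of $\ast$ in the octonionic case: every step that moves a power of $(w-\alpha)$ across the product must be justified by the fact that the moved factor is real. Once that is handled, the rest is routine. As a cross-check, the same conclusion follows from Lemma \ref{Nfg}: for real $\alpha$ the order of vanishing of $N(f)=f\ast\bar f$ at $\alpha$ equals $2\,m_f(\alpha)$, its lowest coefficient being $|\tilde f(\alpha)|^2\neq0$; orders add under the product $N(f\ast g)=N(f)N(g)$ of real functions, giving order $2(s+t)$ and hence $\Deltaa^{s+t}\mid N(f\ast g)$, $\Deltaa^{s+t+1}\nmid N(f\ast g)$.
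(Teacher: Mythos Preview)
Your proof is correct and follows essentially the same structure as the paper's: factor $f=(w-\alpha)^s\tilde f$, $g=(w-\alpha)^t\tilde g$ with $\tilde f(\alpha),\tilde g(\alpha)\neq 0$, use centrality of the real polynomial $(w-\alpha)$ to obtain $f\ast g=(w-\alpha)^{s+t}(\tilde f\ast\tilde g)$, and then show $(\tilde f\ast\tilde g)(\alpha)\neq 0$. The one minor difference is that the paper establishes this last nonvanishing via the normal series and Lemma~\ref{Nfg} (arguing $N(\tilde f\ast\tilde g)(\alpha)=|\tilde f(\alpha)|^2\,|\tilde g(\alpha)|^2\neq 0$), whereas your direct evaluation $(\tilde f\ast\tilde g)(\alpha)=\tilde f(\alpha)\,\tilde g(\alpha)$ at the real point $\alpha$ is more elementary and bypasses that lemma entirely.
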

\begin{proof}
If the real polynomial $w-\alpha$ divides $f$ or $g$, then it divides $f\ast g$. In particular, $\ra(f\ast g)=0$. Moreover, if $f=(w-\alpha)^s h_1$ and $g=(w-\alpha)^t h_2$ with $h_1(\alpha) \neq 0$ and $h_2(\alpha) \neq 0$, then $f\ast g=(w-\alpha)^{s+t} (h_1 \ast h_2)$. It remains to prove that $(w-\alpha)^{s+t+1}\nmid f\ast g$. By density, this is equivalent to show that $w-\alpha\nmid h_1\ast h_2$, i.e.\ $(h_1\ast h_2)(\alpha)\ne0$. But $(h_1\ast h_2)(\alpha)=0$ would imply that $N(h_1\ast h_2)(\alpha)=N(h_1)(\alpha)\cdot N(h_2)(\alpha)=0$. On the other hand, since $\alpha$ is real, $N(h_1)(\alpha)=|h_1(\alpha)|^2 \neq 0$ and $N(h_2)(\alpha)=|h_2(\alpha)|^2 \neq 0$, which is a contradiction.
\end{proof}

%%%%%%%%%%%%%%%%%%%%%

\section{Applications}
\label{sec:Applications}

We begin with an octonionic version of the fundamental theorem of algebra.

\begin{theorem}[Fundamental theorem of algebra]\label{TFA}
If $f$ is a regular octonionic polynomial of degree $n>0$, then its zero set $V(f)$ is non--empty and the cardinality $k$ of the distinct conjugacy classes of elements of $V(f)$ is less than or equal to $n$. More precisely, if $\alpha_1,\ldots,\alpha_r$ are the distinct real zeros of $f$, $\alpha_{r+1},\ldots,\alpha_{r+i}$ are the distinct isolated zeros of $f$ and $\alpha_{r+i+1},\ldots, \alpha_ {r+i+s}$ are pairwise non--conjugate spherical zeros of $f$ such that $\bigcup_{j=1}^s \is_{\alpha_{r+i+j}}$ is the set of all spherical zeros of $f$, then $k=r+i+s$ and the following equality holds:
\[
\sum_{j=1}^k m_f(\alpha_j)=n.
\]
In particular, we have that $r+i+2s \leq n$.
\end{theorem}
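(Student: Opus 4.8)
The plan is to reduce everything to the real polynomial $N(f)=f\ast\overline{f}$ and then count degrees. First I would observe that, writing $f(w)=\sum_{i=0}^n w^i a_i$ with $a_n\ne0$, the coefficient of $w^{2n}$ in $N(f)$ is $a_n\overline{a_n}=|a_n|^2>0$, while all higher coefficients vanish; hence $N(f)$ is a \emph{real} polynomial of degree exactly $2n$. In particular $N(f)$ is non-constant, so restricting it to any complex plane $\C_I$ (where it becomes a genuine complex polynomial of degree $2n$) shows $V(N(f))\ne\emptyset$. By Corollary \ref{cor3}, which gives $V(N(f))=\bigcup_{\alpha\in V(f)}\isa$, a point of $V(N(f))$ forces some sphere $\isa$ to meet $V(f)$, proving that $V(f)\ne\emptyset$.

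The heart of the argument is to factor $N(f)$ over $\R$ and match its irreducible factors with the conjugacy classes of zeros. By unique factorization of real polynomials, $N(f)=|a_n|^2\prod_l p_l^{\,e_l}$ with the $p_l$ distinct monic irreducibles, each either linear $(w-\lambda)$ or an irreducible quadratic. Every such factor comes from a class meeting $V(f)$: since $p_l\mid N(f)$, any root of $p_l$ lies in $V(N(f))$, hence by Corollary \ref{cor3} on a sphere $\isa$ with $\isa\cap V(f)\ne\emptyset$. Conversely, for each of the $k$ distinct classes $\is_{\alpha_j}$ meeting $V(f)$ the polynomial $\Delta_{\alpha_j}$ is a monic real quadratic, irreducible when $\alpha_j$ is non-real and equal to $(w-\alpha_j)^2$ when $\alpha_j$ is real; for distinct classes these are pairwise coprime (using that $\Delta_{\alpha}=\Delta_{\beta}$ iff $\alpha,\beta$ are conjugate). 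By the very definition of multiplicity, $\Delta_{\alpha_j}^{m_f(\alpha_j)}$ is the exact power of $\Delta_{\alpha_j}$ dividing $N(f)$; in the real case this requires the short remark that if $(w-\alpha_j)^s$ exactly divides $f$ then $N(f)=(w-\alpha_j)^{2s}N(h)$ with $N(h)(\alpha_j)=|h(\alpha_j)|^2\ne0$, so the linear exponent is even and equals $2m_f(\alpha_j)$. Regrouping the $p_l^{e_l}$ by class then yields the identity
\[
N(f)=|a_n|^2\prod_{j=1}^k \Delta_{\alpha_j}^{m_f(\alpha_j)}.
\]

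Comparing degrees gives $2n=2\sum_{j=1}^k m_f(\alpha_j)$, hence $\sum_{j=1}^k m_f(\alpha_j)=n$, which is the main equality; since each $m_f(\alpha_j)\ge1$ this yields $k\le n$ at once. To finish, I would invoke the classification from Theorem \ref{thm4} and Corollary \ref{cor2}: each class meeting $V(f)$ is real, isolated, or spherical, so $k=r+i+s$. A spherical zero satisfies $m_f(\alpha)\ge2$, because $\Deltaa$ then divides both $f$ and $\overline{f}$. Splitting $n=\sum_j m_f(\alpha_j)$ according to these three types and using $m_f\ge1$ on the real and isolated classes and $m_f\ge2$ on the spherical ones gives $n\ge r+i+2s$, establishing the ``in particular'' assertion.

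The main obstacle is the exact factorization displayed above: one must verify both that $N(f)$ carries \emph{no spurious} irreducible factors — precisely the content of Corollary \ref{cor3}, $V(N(f))=\bigcup_{\alpha\in V(f)}\isa$ — and that the exponent attached to each class equals exactly the multiplicity $m_f(\alpha_j)$, which is immediate from the definition in the non-real case but needs the even-exponent computation for real zeros. Everything downstream of this identity is pure degree bookkeeping.
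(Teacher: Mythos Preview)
Your argument is correct. Both you and the paper begin identically---compute $\deg N(f)=2n$, restrict to a complex line to get $V(N(f))\neq\emptyset$, and invoke Corollary~\ref{cor3} for $V(f)\neq\emptyset$---but diverge on the multiplicity count. The paper's route is to observe that $\bar f$ has a zero $\hat\alpha=-\bar b\,\bar a^{-1}\in\is_\alpha$ of the same multiplicity as $\alpha$ in $f$, then apply the camshaft effect (Theorem~\ref{thm:cam-shaft}(3)) to the product $N(f)=f\ast\bar f$ to conclude $m_{N(f)}(\alpha_j)=2m_f(\alpha_j)$, and finally sum $m_{N(f)}$ over the $k$ classes to get $2n$. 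You instead go straight to the unique factorization of the real polynomial $N(f)$, match its monic irreducible factors bijectively with the classes $\is_{\alpha_j}$ via Corollary~\ref{cor3}, and read the exponents as $m_f(\alpha_j)$ directly from the definition (supplemented by the even-exponent check at real zeros). Your path is more elementary: it never touches the camshaft effect and makes the equality $\sum_j m_{N(f)}(\alpha_j)=2n$---which the paper leaves implicit---completely transparent as a degree count. The paper's path, on the other hand, showcases Theorem~\ref{thm:cam-shaft} as the engine behind multiplicity additivity, which fits the narrative of Section~\ref{sec:Zerosofproducts} but is heavier machinery than the statement actually requires.
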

\begin{proof}
The normal polynomial $N(f)$ is a real polynomial of degree $2n$. Let $I\in\is$. Then the set $V_I(N(f)):=\{z\in\C_I\ |\ N(f)(z)=0\}=\C_I \cap \bigcup_{\alpha \in V(f)}\isa$ is non--empty and contains at most $2n$ elements. Corollary \ref{cor3} tells that $V(f)\cap\isa\ne\emptyset$ for every $\alpha$ such that $V_I(N(f))\cap\isa\ne\emptyset$. Therefore, $V(f)$ is non--empty.

If $f=\Deltaa h+wa+b$ for some regular function $h$ and $a,b \in \OO$, then $\bar f=\Deltaa \bar h+w\bar a+\bar b$. This implies that an isolated zero $\alpha=-ba^{-1}$ of $f$, with $m_f(\alpha)=s$, corresponds to an isolated zero $\hat{\alpha}=-\bar b\bar a^{-1} \in \isa$ of $\bar f$ with the same multiplicity. From Theorem \ref{thm:cam-shaft}(3), we get that $\alpha$ is a spherical zero of $N(f)=f\ast \bar f$ of multiplicity $2s$. The same property holds for spherical and real zeros of $f$. Then, given $\alpha_1,\ldots,\alpha_k$ as in the statement of the theorem, it follows that
\[
2\sum_{j=1}^km_{f}(\alpha_j)=\sum_{j=1}^km_{N(f)}(\alpha_j)=2n.
\]
Since the multiplicity of a spherical zero is at least $2$, it follows immediately that $r+i+2s \leq n$. The proof is complete.
\end{proof}

A repeated application of the division lemma (see Remark \ref{rem:pol}), of Theorem \ref{thm:cam-shaft} and of Theorem \ref{TFA} gives:

\begin{corollary}[Factorization lemma]\label{factlemma}
Let $f$ be a regular octonionic polynomial of degree $n>0$. Let $\alpha_1\in V(f)$. Then there exist $\alpha_2,\ldots,\alpha_n, c \in \oc$ with $c \neq 0$ such that $f$ factors as follows:
%\[f(w)=(w-\alpha_1)\ast((w-\alpha_2)\ast(\cdots\ast(w-\alpha_d)\cdots)).\]
%\[f(w)=(w-\alpha_1)\ast f_1(w),\ f_1(w)=(w-\alpha_2)\ast f_2(w),\ldots, f_{d-1}(w)=w-\alpha_d.\]
\begin{align*}
f(w)&=(w-\alpha_1)\ast f_1(w),\text{\quad where}\\
f_k(w):&=(w-\alpha_{k+1})\ast f_{k+1}(w)\text{\quad for $k=1,2,\ldots, n-2$\ and}\\
f_{n-1}(w):&=(w-\alpha_n)\ast c.
\end{align*}
Moreover, for every $k=1,\ldots, n-1$, $\alpha_{k+1}$ is a zero of $f_k$ and it is conjugate to a zero  $\alpha'_{k+1}$ of $f$. \qed
\end{corollary}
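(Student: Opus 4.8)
The plan is to obtain the factorization by a finite descent on the degree: at each stage I peel off one linear $\ast$-factor using the division lemma, invoke the fundamental theorem of algebra to guarantee that the current quotient still has a zero, and use Remark~\ref{rem:pol} to keep track of degrees. The conjugacy assertion is then extracted from the camshaft material, concretely from Corollary~\ref{cor5}.

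First, I set $f_0:=f$. Given $\alpha_1\in V(f)$, Lemma~\ref{division-lemma}(1) writes $f(w)=(w-\alpha_1)\ast f_1(w)+r$ with remainder $r=f(\alpha_1)=0$; hence $f=(w-\alpha_1)\ast f_1$, and $f_1$ is a polynomial of degree $n-1$ by Remark~\ref{rem:pol}. I then iterate: assuming I have produced a polynomial $f_k$ of degree $n-k\ge 1$, Theorem~\ref{TFA} supplies a zero $\alpha_{k+1}\in V(f_k)$, and Lemma~\ref{division-lemma}(1), whose remainder equals $f_k(\alpha_{k+1})=0$, yields $f_k=(w-\alpha_{k+1})\ast f_{k+1}$ with $\deg f_{k+1}=n-k-1$. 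This runs for $k=1,\dots,n-1$ and terminates when $f_{n-1}$ has degree $1$: Theorem~\ref{TFA} provides its zero $\alpha_n$, and the division lemma gives $f_{n-1}=(w-\alpha_n)\ast c$ with $c$ of degree $0$, i.e.\ a constant. Since the coefficient of $w$ in $(w-\alpha_n)\ast c$ equals $c$ and $\deg f_{n-1}=1$, we conclude $c\ne 0$.

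It remains to prove that each $\alpha_{k+1}$, which is a zero of $f_k$ by construction, is conjugate to a zero of $f$. I would descend through the chain $f_{j-1}=(w-\alpha_j)\ast f_j$ one factorization at a time. Since $\alpha_{k+1}\in V(f_k)\cap\is_{\alpha_{k+1}}$, the class $\is_{\alpha_{k+1}}$ meets $V(f_k)$; applying Corollary~\ref{cor5} to $f_{k-1}=(w-\alpha_k)\ast f_k$ shows that $\is_{\alpha_{k+1}}$ also meets $V(f_{k-1})$, and repeating this for $j=k,k-1,\dots,1$ gives $\is_{\alpha_{k+1}}\cap V(f)\ne\emptyset$. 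Any point $\alpha'_{k+1}$ in this intersection is then a zero of $f$ conjugate to $\alpha_{k+1}$, as required.

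The main obstacle is exactly this last step, and it is where the octonionic setting bites: because the $\ast$-product is non-associative, the nested factorization is not a single flat product of linear terms, so one cannot conclude directly that $\alpha_{k+1}\in V(f)$. The conjugacy has to be transported back to $f$ sphere-by-sphere through each individual $\ast$-factorization, which is precisely what Corollary~\ref{cor5} provides, resting in turn on the multiplicativity of the normal series (Lemma~\ref{Nfg}), the analytic heart of the camshaft effect. By comparison, the degree bookkeeping through Remark~\ref{rem:pol}, which certifies both that $c\ne 0$ and that the descent halts after exactly $n$ steps, is routine.
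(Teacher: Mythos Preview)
Your argument is correct and matches the paper's own proof, which is just the one-line remark that the corollary follows from repeated use of the division lemma (with Remark~\ref{rem:pol}), Theorem~\ref{TFA}, and the camshaft material; you have simply written out the induction that this sentence encodes. The only cosmetic difference is that for the conjugacy of $\alpha_{k+1}$ to a zero of $f$ you invoke Corollary~\ref{cor5} rather than Theorem~\ref{thm:cam-shaft}: this is a harmless economy, since Corollary~\ref{cor5} already gives the sphere-level transport you need, while the full Theorem~\ref{thm:cam-shaft} would in addition supply explicit formulas for the $\alpha'_{k+1}$ (which is what the Remark following the corollary in the paper alludes to).
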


\begin{remark}
Theorem \ref{thm:cam-shaft} tells how to obtain the zeros $\alpha'_{k+1}\in V(f)$ from the set $\{\alpha_{i}\}_{i=2,\ldots, n}$.
\end{remark}

A standard application of the fundamental theorem of algebra in the complex field is the uniqueness of monic polynomials with prescribed zeros. On the quaternions and on the octonions, a new phenomenon appears. 
The multiplici\-ty of an isolated zero of the sum of two regular functions can be strictly less than those of the functions. This fact implies that two different monic polynomials can have the same zeros with the same multiplicities. For example, the polynomials $f(w)=(w-i)\ast (w-i)$ and $g(w)=(w-i)\ast (w-j)$ 
%($i,j\in\is$, $i\ne \pm j$) 
have the unique isolated zero $w=i$ with multiplicity 2, while the difference $f-g$ has a simple zero $i$.
This fact cannot happen when all the zeros are real or spherical or simple, since in this case
\[m_{f+g}(\alpha)\ge \min\{m_f(\alpha),m_g(\alpha)\}.\]

On the quaternions, the existence of a unique monic regular polynomial with assigned pairwise non--conjugate zeros was proved by Beck \cite{Be} (see also \cite{CeM}, \cite{BrW} and \cite{GeSt3}). On the octonions, it was recently proved by Ser\^odio \cite{Se2}. The existence of an infinite numbers of monic regular quaternionic polynomials with prescribed multiple isolated zeros was proved by Beck \cite{Be}.
%Note that the factorization lemma above (Corollary \ref{factlemma}), together with the formulas of the ``camshaft effect'', gives an iterative method to construct an octonionic polynomial with a prescribed set of zeros.

An immediate consequence of the fundamental theorem and Propositions \ref{pro3}, \ref{pro4} is the following result, which generalizes the one proved by Lam in the quaternionic case (cf.\ \cite[\S16.19]{Lam}):

\begin{corollary}\label{cor12}
Let $f(w)=\sum_{i=0}^n w^ia_i$ be a regular octonionic polynomial of degree $n>0$. The following statements hold.
\begin{itemize}
\item[$(1)$] If the coefficients $a_1,\ldots, a_n$ are real and $a_0$ is non--real, then $f$ has only isolated zeros. 
\item[$(2)$] If $a_2,\ldots, a_n$ are real and either $a_0$ or $a_1$ is non--real, then $f$ has no spherical zeros.
\end{itemize}
In particular, the set $V(f)$ contains exactly $n$ elements, counted with their multiplicities.\qed
\end{corollary}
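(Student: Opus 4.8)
The plan is to deduce both assertions directly from Propositions \ref{pro3} and \ref{pro4}, and then to read off the concluding count from the fundamental theorem (Theorem \ref{TFA}). The only real work is to cast $f$ into the exact algebraic shape those two propositions require; everything else is a matter of invoking results already in hand.

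For part $(1)$ I would split off the constant term and write $f(w)=g(w)+a_0$ with $g(w):=\sum_{i=1}^n w^i a_i$. By hypothesis $a_1,\ldots,a_n$ are real, so $g$ is real in the sense of the paper, while $a_0$ is non--real. Proposition \ref{pro3} then applies verbatim and yields that every element of $V(f)$ is an isolated zero (in fact lying in the complex plane generated by $1$ and $a_0$); in particular there are neither real nor spherical zeros. For part $(2)$ I would instead peel off the two lowest--order terms and write $f(w)=g(w)+wa_1+a_0$ with $g(w):=\sum_{i=2}^n w^i a_i$. Now $a_2,\ldots,a_n$ real forces $g$ real, and by assumption at least one of $a_0,a_1$ is non--real, so the hypotheses of Proposition \ref{pro4} are met; that proposition gives at once that $f$ has no spherical zeros.

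Finally, for the concluding count I would observe that in both situations $f$ has no spherical zeros: this is immediate in case $(2)$, and in case $(1)$ it follows a fortiori, since there all zeros are isolated. Hence, in the notation of Theorem \ref{TFA}, the spherical count is $s=0$, so every element of $V(f)$ is either a real or an isolated zero and is therefore a single point of $\oc$ (rather than an entire six--sphere $\isa$). Consequently $V(f)$ is finite, each of its points carries the multiplicity $m_f$ attached to its conjugacy class, and the mass formula $\sum_{j=1}^k m_f(\alpha_j)=n$ from Theorem \ref{TFA} says precisely that $V(f)$ has exactly $n$ elements counted with multiplicity. I do not foresee a genuine obstacle here; the one point demanding care is the bookkeeping in this last step, namely recognizing that the vanishing of the spherical count is exactly what converts the multiplicity sum of Theorem \ref{TFA} into an honest count of points of $V(f)$.
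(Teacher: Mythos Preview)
Your proposal is correct and follows exactly the route the paper intends: the corollary is stated with a \qed and is introduced as ``an immediate consequence of the fundamental theorem and Propositions~\ref{pro3}, \ref{pro4}'', which is precisely what you do. The only thing you add beyond the paper is spelling out the bookkeeping that $s=0$ turns the mass formula of Theorem~\ref{TFA} into an honest point count, and that is exactly the right observation.
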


\begin{example}
To illustrate the situation described by the preceding corollary, we consider the polynomial $f(w)=w^2+wi+j$. Since $N(f)=w^4+w^2+1$, $\bigcup_{\alpha \in V(f)}\isa$ is equal to $\is_{\beta} \cup \is_{\gamma}$, where $\beta:=\frac12+i\frac{\sqrt3}2$ and $\gamma:=-\frac12+i\frac{\sqrt3}2$.
The corresponding remainders are
\[r_{\beta}(f)=w(1+i)-(1-j) \text{\quad and \quad} r_{\gamma}(f)=w(-1+i)-(1-j),\]
which give the following two simple isolated zeros of $f$: $\alpha_1=\frac12(1-i-j-ij)\in\is_{\beta}$ and $\alpha_2=\frac12(-1-i+j-ij)\in\is_{\gamma}$.
\end{example}

%%%%%%%%%%%%%%%%%%%%%%%%%%%%%%%%%
%%%%%%%%%%%%%%%%%%%%%%%%%%%%%%%%%

\end{document}